\newcommand{\Cc}{\mathbb{C}} 
\newcommand{\Rr}{\mathbb{R}}
\newcommand{\Nn}{\mathbb{N}}
\newcommand{\Zz}{\mathbb{Z}}
\newcommand{\Qq}{\mathbb{Q}}
\newcommand{\Ff}{\mathbb{F}}
\renewcommand {\leq}{\leqslant}
\renewcommand {\geq}{\geqslant}
\renewcommand {\le}{\leqslant}
\renewcommand {\ge}{\geqslant}
\newcommand{\supp}{\mathop{\mathrm{supp}}\nolimits}
\renewcommand{\sp}{\mathop{\mathrm{sp}}\nolimits}
\newcommand{\point}{^\ast}
\newcommand {\xbar}{\underline{x}}
\newcommand {\tbar}{{\underline{t}}}
\newcommand {\charac}{\mathop{\mathrm{char}}\nolimits}
\newcommand{\defi}[1]{\emph{#1}}
\theoremstyle{plain}
\newtheorem{theorem}{Theorem}[section]    % theorem with number
\newtheorem{proposition}[theorem]{Proposition}      % lemma with number
\newtheorem{corollary}[theorem]{Corollary}      % lemma with number
\theoremstyle{remark}
\newtheorem{definition}[theorem]{Definition}      % lemma with number
\newtheorem*{remark*}{Remark}  % theorem without number
\newtheorem{remark}[theorem]{Remark}   % theorem without number
\newtheorem{example}[theorem]{Example}
\title{Families of polynomials and their specializations}
\author{Arnaud Bodin}
\author{Pierre D\`ebes}
\author{Salah Najib}
\email{Arnaud.Bodin@math.univ-lille1.fr}
\email{Pierre.Debes@math.univ-lille1.fr}
\email{slhnajib@gmail.com}
\address{Laboratoire Paul Painlev\'e, Math\'ematiques, Universit\'e Lille 1, 59655 Villeneuve d'Ascq Cedex, France}
\address{Laboratoire Paul Painlev\'e, Math\'ematiques, Universit\'e Lille 1, 59655 Villeneuve d'Ascq Cedex, France}
\address{Facult\'e Polydisciplinaire de Khouribga, Universit\'e Hassan 1er,
BP 145, Hay Ezzaytoune, 25000 Khouribga, Maroc.}
\subjclass[2010] {Primary 12E05 ; Sec. 11C08, 13P05, 12Y05}
\keywords{Polynomials, Specialization, Irreducibility, Newton Polygon, Good Reduction, Deformation.}
\thanks{{\it Acknowledgment}. This work was supported in part by the Labex CEMPI  (ANR-11-LABX-0007-01)
and by the ANR project ``SUSI'' (ANR-12-JS01-0002-01). The third author wishes to thank Laboratoire Painlev\'e 
of Universit\'e de Lille for its hospitality during several visits in 2014 and 2015.}
\date{\today}
\begin{document}

\begin{abstract}
For a polynomial in several variables depending on some parameters, 
we discuss some results to the effect that for almost all values of the parameters 
the polynomial is irreducible. In particular we recast in this perspective some
results of Grothendieck and of Gao.
\end{abstract}

\maketitle

%%%%%%%%%%%%%%%%%%%%%%%%%%%%%%%%%%%%%%%%%%%%%%%%%%%%%%%%%%%%%%%%%
%%%%%%%%%%%%%%%%%%%%%%%%%%%%%%%%%%%%%%%%%%%%%%%%%%%%%%%%%%%%%%%%%

This paper is devoted to irreducibility questions for families of polynomials in 
several indeterminates $x_1,\ldots,x_\ell$ parametrized by further indeterminates 
$t_1,\ldots,t_s$. 
%The core of it is to recast in this perspective some results coming from arithmetic geometry
%and from polyhedral combinatorics. 
We assume that  $\ell \geq 2$ and the base field $k$ is algebraically 
closed;  the more arithmetic case $\ell = 1$ depends on the base field and involves 
different tools and techniques. 

Set $\underline t=\{t_1,\ldots,t_s\}$, $\underline x=\{x_1,\ldots,x_\ell\}$ and consider a polynomial 
$F \in k[\underline t, \underline x]$, irreducible in  $\overline{k(\underline t)}[\underline x]$ (where 
$\overline{k(\underline t)}$ is the algebraic closure of $k(\underline t)$); $F$ is 
said to be {\it generically irreducible}. The core question is about the irreducibility of the polynomials obtained by substituting 
elements $t_1^\ast,\ldots,t_s^\ast\in k$ for the corresponding  parameters $t_1,\ldots,t_s$ -- 
the {\it specializations of $F$}. 
%The paper is aimed at recasting 
%The core of the paper is to recast in this perspective some results coming from arithmetic geometry
%and from polyhedral combinatorics.
%(as a family of polynomials in ${\underline x}$ parametrized by ${\underline t}$). 

More specifically we wish to investigate the following 
problem, as explicitly as possible: 

\noindent
- when the generic irreducibility property is satisfied, show some boundedness results on 
the following set, which we call the {\it spectrum} of $F$:
\vskip 1mm

\centerline{${\rm sp}(F) = \left\{ \underline t^\ast=(t_1^\ast,\ldots,t_s^\ast) \in k^s \mid F(\underline t^\ast,\underline x) 
 \text{ is reducible in } k[\underline x] \right\}$,}
\vskip 1mm

\noindent
and some density results for its complement,

\noindent
- find some criteria for the generic irreduciblity property to be satisfied and 
deduce some new specific examples.

A first approach rests on classical results of Noether and Bertini 
and a second one involves more combinatorial tools like the Newton 
polygon and the associated Minkowski theorem.
%There are classical approaches which we first review from our perspective. 
We contribute to these approaches by implementing some ideas and results coming 
from connected areas, notably of Grothendieck (Arithmetic Geometry) and 
Gao (Polyhedral Combinatorics).
%coming
%from arithmetic geometry, notably of Grothendieck, and from polyhedral combinatorics,
%notably of Gao. 
This leads to new answers to the problem
together with an improved and unified presentation of results from our 
previous papers \cite{BoDeNa2} \cite{BoDeNa1} and other related papers. Those were 
concerned with special cases of the general situation considered here. In particular 
polynomials  $f(x,y)-t$ and variants of those have been much studied and the word 
 ``spectrum'' refers to the classical terminology used in this special case.

\S 1 briefly reviews the classical background and introduces our contribution, 
which is then detailed in \S \ref{sec:grothendieck} and \S \ref{sec:combinatoric}.
%We then explain in more details in \S \ref{sec:grothendieck} and in \S \ref{sec:combinatoric}
%how to recast towards our perspective the results from arithmetic geometry and from 
%polyhedral combinatorics we have 
%in mind.

%We generalize and unify some results from our previous papers \cite{BoDeNa2} \cite{BoDeNa1}
%and other related papers, which were concerned with special cases of the general situation considered 
%here. In particular polynomials  $f(x,y)-t$ and variants of those  have been much studied. 
%The word  ``spectrum'' refers to the classical terminology used in this special case.
%
%We then detail and prove
%our results for each question in \S \ref{sec:grothendieck} and \S \ref{sec:combinatoric}.

%We more generally consider several polynomials $F_1,\ldots,F_h$ ($h\geq 1$) 
%satisfying the same hypotheses as $F$. However to lighten notation, we mostly 
%work with one polynomial $F$ and insist on the several polynomial situation when 
%passing from one to several polynomials requires additional arguments.

\section{The classical approaches and our contribution} \label{intro}
%----------------------------------------
\subsection{The arithmetico-geometric approach} \label{ssec:arith-geo-approach}
%The general outcome is that ${\rm sp}(F)$ is small in various senses: contained in a proper 
%closed Zariski subset, with bounds for the relevant degrees, etc. 
%
%Generally speaking the outcome 
%is that the complement of ${\rm sp}(F)$ contains a non-empty open Zariski subset, that is, 
%that $F(\underline t^\ast,\underline x)$ is irreducible for every $\underline t^\ast\in k^s$ 
%but in the zero set of some non-zero polynomial. 

%Fix an algebraically closed field $k$, a $s$-tuple $\underline t = (t_1,\ldots,t_s)$ 
%and an $\ell$-tuple $\underline x = (x_1,\ldots,x_\ell)$ of indeterminates, with 
%$s\geq 1$ and $\ell\geq 2$ and  consider a polynomial $F \in k[\underline t, \underline x]$, 
%irreducible in  $\overline{k(\underline t)}[\underline x]$; we say that $F$ 
%is {\it generically irreducible} (as a family of polynomials in ${\underline x}$ 
%parametrized by ${\underline t}$). 
%We view $F$ as a family of polynomials in the indeterminates $\underline x$ parametrized 
% by $\underline t$ over the field $k$. 
%$k[\underline t][\underline x]$ and such that $\deg_{\underline x}(F) >0$ (equivalently: 
% $F$ is irreducible in  $k(\underline t)[\underline x]$).
%which will remain an important example of ours.

Fix $F \in k[\underline t, \underline x]$ and assume as above that it is generically irreducible.
%, {\it i.e.}, irreducible in  $\overline{k(\underline t)}[\underline x]$.
\subsubsection{Noether} 
%A first general fact about the spectrum ${\rm sp}(F)$ is this. 
Denote by ${\mathcal U}_F$ the open Zariski subset of all $\underline t^\ast\in k^s$ such that 
$\deg(F(\underline t^\ast,\underline x)) = \deg_{\underline x}F(\underline t,\underline x)$. The spectrum ${\rm sp}(F)$ is a proper Zariski 
clo\-sed subset of ${\mathcal U}_F$: there 
exist non-zero polynomials $h_1,\ldots, h_\nu \in k[\underline t]$ \hbox{such that}
\vskip 2mm

\noindent
(1) \hskip 20mm $ {\rm sp}(F) \cap {\mathcal U}_F = {\mathcal Z}(h_1,\ldots, h_\nu) \cap {\mathcal U}_F$
\vskip 1mm

\noindent
where  ${\mathcal Z}(h_1,\ldots, h_\nu)$ denotes the zero set of $h_1,\ldots, h_\nu$.
%all $\underline t\in k^s$ such that $h_i(\underline t)=0$, $i=1,\ldots,\nu$. 
In other words, for $\underline t^\ast\in k^s$ such that 
$\deg(F(\underline t^\ast,\underline x)) = \deg_{\underline x}F(\underline t,\underline x)$, 
\vskip 2mm

\noindent
{\rm (2)} {\it $F(\underline t^\ast,\underline x)$ 
is reducible in $k[\underline x]$ if and only if $h_m(\underline t^\ast)=0$ \hbox{for each $m=1,\ldots,\nu$.}}

\vskip 2mm

\noindent
This is the classical Noether theorem ({\it e.g.} \cite[\S 3.1 theorem 32]{schinzel-book}) 
which follows from elimination theory. Namely recall that for a given degree $d$ 
%(here $d=\deg_{\underline x}(F)$) 
and a given number of indeterminates $\ell$, 
if $(a_{\underline i})_{{\underline i}\in I_{\ell,d}}$ are indeterminates that 
correspond to the coefficients of a polynomial of degree $d$ in $\ell$ indeterminates, 
then there exist finitely many homogeneous forms ${\mathcal N}_{j}(a_{\underline i})$ 
($j=1,\ldots, D$) in the $a_{\underline i}$ (${{\underline i}\in I_{\ell,d}}$) 
and with coefficients in $\Zz$ such that: 
\vskip 1mm

\noindent
(3) {\it for a polynomial $P$ of degree $d$ in $\ell$ indeterminates and with coefficients 
$(a_{\underline i}^\ast)_{{\underline i}\in I_{\ell,d}}$ in an algebraically closed field 
$K$ (in such a way that $a_{\underline i}^\ast$ corresponds to $a_{\underline i}$),
%obtained from $(a_{\underline i})_{{\underline i}\in I_{n,\ell}}$ by specializing each 
%$a_{\underline i}$ to an element $a_{\underline i}^\ast$, 
the polynomial 
$P$,  if it is of degree $d$, is reducible in $K[\underline x]$ if and only if 
${\mathcal N}_{j}(a_{\underline i}^\ast)=0$, $j=1,\ldots,D$.}

\vskip 2mm

\noindent
Furthermore some subsequent works of Ruppert, Kaltofen and Ch\`eze-Bus\'e-Najib 
provide the following bounds for the degree of the {\it Noether forms ${\mathcal N}_{j}$}: 
\vskip 2mm

\centerline{
$\displaystyle
\left\{ \begin{matrix}
\deg({\mathcal N}_{j}) \leq d^2-1 \hfill & \hbox{\rm if $k$ is of characteristic $0$ \cite{ruppert}}  \hfill\cr
& \hbox{\rm or $p>d(d-1)$}\ \hbox{ \cite[theorem 1]{BCN}} \hfill \cr
\deg({\mathcal N}_{j}) \leq 12 d^6 \hfill & {\rm in\  general} \ 
\hbox{\cite{kaltofen}} \hfill \cr
\end{matrix}
\right.$
}
\vskip 2mm

\noindent
Description (1) of ${\rm sp}(F)$ follows, as explained in \cite[\S 2.3.1]{BoDeNa1}: 
one can take for $h_1,\ldots, h_\nu \in k[\underline t]$ the values of the Noether 
forms at the coefficients in $k[\underline t]$ of the polynomial $F$.
The above bounds yield, in each case
\vskip 2mm

\noindent 
(4) \hskip 10mm $\deg_{t_i}(h_m) \leq \deg_{t_i}(F) \times \displaystyle
\left\{ \begin{matrix}
d^2-1 \hfill \cr
12 d^6 \hfill \cr
\end{matrix}
\right.$ \hskip 5mm ($i=1,\ldots, s$, $m=1,\ldots,\nu$)

\subsubsection{Bertini-Noether} \label{ssec:bertini-noether} 
Every non-zero polynomial $h$ in the ideal $\langle h_1,\ldots, h_\nu\rangle$ of  $k[\underline t]$
%For every non constant polynomial $h$ in the ideal 
%$\langle h_1,\ldots, h_\nu\rangle\subset k[\underline t]$, 
%the proper 
%Zariski closed subset ${\mathcal Z}(h_1,\ldots, h_\nu)$ is contained 
%in the hypersurface ${\mathcal Z}(h)\subset k^s$. The polynomial $h\in k[\underline t]$ 
has this Bertini-Noether property: for every $\underline t^\ast\in k^s$ such that 
$\deg(F(\underline t^\ast,\underline x)) = \deg_{\underline x}F(\underline t,\underline x)$,

\vskip 2mm

\noindent
{\rm (5)} \hskip 8mm {\it if $h(\underline t^\ast)\not=0$ then $F(\underline t^\ast,\underline x)$
is irreducible in $k[\underline x]$.}

\vskip 2mm

\noindent
Taking for $h$ one of the non-zero polynomials $h_1,\ldots, h_\nu$ yields, 
for $s=1$ and $k$ of characteristic $0$ or $p>d(d-1)$,
\vskip 2mm

\noindent
{\rm (6)} \hskip 18mm ${\rm card}({\rm sp}(F) \cap {\mathcal U}_F) \leq (d^2-1) \deg_{t_1}(F)$
\vskip 2mm

\noindent
More generally, if $s\geq 1$, we have this conclusion:
\vskip 2mm

\noindent
{\rm (7)} {\it For every $i=1,\ldots,s$, the set of $t_i^\ast \in k$
  such that the polynomial $F(t_1,\ldots,t_{i-1},t_i^\ast,t_{i+1},\ldots,t_s,\underline x)$ 
  is of degree $d$ and reducible in the polynomial ring
  $\overline{k(t_1,\ldots,t_{i-1},t_{i+1},\ldots,t_s)}[\underline x]$ is of 
  cardinality $\leq (d^2-1) \deg_{t_i}(F)$.}
  \vskip 2mm

\noindent
Consequently, the polynomial $F(\underline t^\ast,\underline x)$ is irreducible in $k[\underline x]$ provided that
 
%\begin{itemize}
%\item 

\noindent
%$\bullet$ 
- $t_1^\ast \in k$ stays out of a certain finite set $E_1$ of cardinality $\leq d^2 \deg_{t_1}(F)$, 
  
  %\item 
\noindent
%$\bullet$  
- $t_2^\ast \in k$ stays out of a certain finite set $E_2$ of cardinality 
$\leq d^2 \deg_{t_2}(F)$ 

\noindent
\hskip 2mm ($E_2$ depending on $t_1^\ast$),

 %\item 
\noindent
%$\bullet$  
- $\cdots$

  %\item 
\noindent
%$\bullet$ 
- $t_s^\ast \in k$ stays out of a certain finite set $E_s$ of cardinality $\leq d^2
\deg_{t_s}(F)$  

\noindent
\hskip 2mm ($E_s$ depending on $t_1^\ast,\ldots,t_{s-1}^\ast$). 

\subsubsection{Grothendieck and our contribution}\label{ssec:grothendieck-our-contribution}
We offer an approach in which we replace elimination theory and the Noether theorem
by the Gro\-thendieck good reduction criterion for algebraic covers. We base it on \cite{DeRedSpePol} 
which revisits Grothendieck's work \cite{SGA} \cite{GrMu} with a  
polynomial viewpoint.
%, which is another classical tool in this context. 

For polynomials $F(\underline t, T,Y)$ with $\ell = 2$ indeterminates $T,Y$, monic in $Y$, which this ap\-proach 
is more naturally concerned with, we produce an explicit polynomial ${\mathcal B}_F\in k[\underline t]$, 
called the {\it bad prime divisor} of $F$, that has the Bertini-Noether property (5):
%Assume $F$ is monic and $k$ is of characteristic $0$.
\vskip 2mm

\noindent
{\bf (Corollary \ref{cor:specializations})} {\it Assume ${\rm char}(k)=0$. If $\underline t^\ast\in k^s$ 
satisfies ${\mathcal B}_F(\underline t^\ast) \not= 0$, then the polynomial 
$F(\underline t^\ast,T,Y)\in k[T,Y]$ is irreducible in $k[T,Y]$}.
 \vskip 2mm

The polynomial ${\mathcal B}_F$ is directly computable from the coefficients of $F$ through elementary operations, starting with the discriminant $\Delta_F\in k[\underline t][T]$ of $F$ relative to $Y$ (\S \ref{sec:bad-prime-divisor}).
This general bound for the degree of ${\mathcal B}_{F}$ follows:
\vskip 1mm

\noindent
{\rm (8)} \hskip 18mm $\deg_{t_i}({\mathcal B}_{F}) \leq  16 d^5 \deg_{t_i}({F})$, ($i=1,\ldots,s$).
\vskip 1mm

\noindent
It is not as good as (4); the advantage of ${\mathcal B}_F$ lies in its full explicitness (which may lead to better bounds in specific cases (see \S \ref{ssec:typical})) and in its arithmetic meaning, where the name ``bad prime divisor'' originates: if ${\mathcal B}_F(\underline t^\ast) \not= 0$, the distinct roots (in $\overline{k(\underline t)}$) of $\Delta_F$ remain defined and distinct after specialization of $\underline t$ to $\underline t^\ast\in k^s$. 
%We call ${\mathcal B}_F$ the {\it bad prime divisor} of $F$. 
The construction improves on 
\cite[\S 3]{BoDeNa2}, which used a result of Zannier rather than
%as developed in \cite{DeRedSpePol}, 
the Grothendieck reduction theory.
% rather than on a theorem of Zannier in \cite{BoDeNa2}. 

We also explain how to get rid of the ``monic'' assumption in corollary \ref{cor:specializations}, to relax
the condition on the characteristic of $k$ and to pass from $2$ to any number $\ell$ of indeterminates.
We finally obtain a statement like corollary \ref{cor:specializations} above but in the bigger generality.  
The polynomial ${\mathcal B}_F$ has to be adjusted but is still explicitly described (see \S \ref{ssec:conjoin} and corollary \ref{cor:s-indetreminates}). 
%The main additional ingredient is the Matsusaka-Zariski theorem. 

%----------------------------------------
%\subsection{The Newton polyhedron and Minkowski theorem}
\subsection{The more combinatorial approach} This second approach uses the Newton representation of 
polynomials as polyhedrons and the associated Minkowski irreducibility criterion. We will also review another 
related approach, based on the Bertini-Krull theorem and compare the two.
%, which we will review and compare to the former.

\subsubsection{Newton-Minkowski}
The Newton representation identifies monomials $x_1^{i_ 1}\cdots x_\ell^{i_\ell}$ with the corresponding $\ell$-tuples $(i_1,\ldots,i_\ell)\in \Rr^\ell$. Given a polynomial $P\in k[\xbar]$, define then its \defi{support} $\supp(P)$ as the set of monomials appearing in $P$ with a non-zero coefficient and
%$\supp(P) = \{ \ibar \in \Nn^\ell \mid a_{\ibar} \not= 0 \}$ and 
the \defi{Newton polyhedron} $\Gamma(P)$ of $P$ as the convex closure in $\Rr^\ell$ of $\supp(P)$.
%$$\Gamma(P) = \text{Conv}( \supp(P) )$$
Also define $\Gamma_0(P)$ to be the convex closure of $\supp(P) \cup \{0\}$.
For example, the polynomial
$P(x,y)=x^2y^3+x^3y^2+x^4y^2+x^4y^4+x^5y^3+x^7y^3$ 
has the following Newton polygons $\Gamma(P)$ and $\Gamma_0(P)$ :
\begin{center}
% Aire sous la courbe de l'exponentielle entr 0 et 1.
\begin{tikzpicture}[scale=0.6]

  \draw[gray,->] (-1,0) -- (8.5,0) node[below,black] {$x$};
  \draw[gray,->] (0,-1) -- (0,5.5) node[right,black] {$y$};

 \draw (0,0) grid (8,5);

  \coordinate (O) at (0,0);
  \coordinate (P1) at (2,3);
  \coordinate (P2) at (3,2);
  \coordinate (P3) at (7,3);
  \coordinate (P4) at (4,4);
  \coordinate (P5) at (5,1);
  \coordinate (P6) at (5,3);
  \coordinate (P7) at (4,2);

  \draw[red, thick] (P2)--(P1)--(P4)--(P3)--(P5)--cycle;

     \fill[blue] (P1) circle (2pt);
     \fill[blue] (P2) circle (2pt);
     \fill[blue] (P3) circle (2pt);
     \fill[blue] (P4) circle (2pt);
     \fill[blue] (P5) circle (2pt);
     \fill[blue] (P6) circle (2pt);
     \fill[blue] (P7) circle (2pt);

\node[red] at (3.2,3.8) {$\Gamma(P)$};

%  \fill[blue] (O) circle (2pt);
% \node[below left] at (O) {$0$};
% 
%  \draw[green] (O)--(P1)--(P4)--(P3)--(P5)--cycle;
% \node[red] at (1,2) {$\Gamma_0(P)$};
\end{tikzpicture}
% Aire sous la courbe de l'exponentielle entr 0 et 1.
\begin{tikzpicture}[scale=0.6]

  \draw[gray,->] (-1,0) -- (8.5,0) node[below,black] {$x$};
  \draw[gray,->] (0,-1) -- (0,5.5) node[right,black] {$y$};

 \draw (0,0) grid (8,5);

  \coordinate (O) at (0,0);
  \coordinate (P1) at (2,3);
  \coordinate (P2) at (3,2);
  \coordinate (P3) at (7,3);
  \coordinate (P4) at (4,4);
  \coordinate (P5) at (5,1);
  \coordinate (P6) at (5,3);
  \coordinate (P7) at (4,2);

%  \draw[red, thick] (P2)--(P1)--(P4)--(P3)--(P5)--cycle;

     \fill[blue] (P1) circle (2pt);
     \fill[blue] (P2) circle (2pt);
     \fill[blue] (P3) circle (2pt);
     \fill[blue] (P4) circle (2pt);
     \fill[blue] (P5) circle (2pt);
     \fill[blue] (P6) circle (2pt);
     \fill[blue] (P7) circle (2pt);

%\node[red] at (3.2,3.8) {$\Gamma(P)$};

 \fill[blue] (O) circle (2pt);
\node[below left] at (O) {$0$};

 \draw[red] (O)--(P1)--(P4)--(P3)--(P5)--cycle;
\node[red] at (1,2) {$\Gamma_0(P)$};
\end{tikzpicture}  
\end{center}

%\noindent
The Minkowski theorem is this irreducibility criterion, where the sum $A+B$ of the two subsets $A$ and $B$ of $\Rr^\ell$ is $A+B = \{a+b \mid a\in A, b\in B\}$:
\vskip 2mm

\noindent
(10) {\it If $P = P_1\cdot P_2$, with $P_1,P_2\in k[\underline x]$, then $\Gamma(P) = \Gamma(P_1) + \Gamma(P_2)$. Consequently, if $\Gamma(P)$ is not summable, then $P$ is irreducible in $k[\underline x]$,}
\vskip 2mm

\noindent
where we say that $\Gamma(P)$ is \defi{summable} if it writes as the sum $A+B$
of two convex subsets $A$ and $B$ with integral vertices (in $\Nn^\ell$), each of 
them having at least two points. The converse is false: $(x-y+1)(x+y+1)+1$ is 
irreducible but its Newton polygon is summable as it is also that of $(x-y+1)(x+y+1)$. 

%$xy+x+y+2$ is irreducible but
%its Newton polygon is summable as it also that of $xy+x+y+1$. 

%\begin{corollary}
%If $\Gamma(P)$ is not summable, then $P$ is irreducible.
%\end{corollary}

%?? The converse is false as $P(x,y)=x^2+y^2+1$ 
%has a summable Newton polygon (equal to the one of $P'(x,y)=(x-y)(x+y)$ but is irreducible). 

\subsubsection{Our perspective} The Minkowski theorem can be viewed as follows. Suppose given a non summable convex
subset $\Gamma \subset \Rr^\ell$ with a set ${\mathcal V}_\Gamma$ of vertices contained in $\Nn^\ell$.
Consider the polynomial obtained by summing all the monomials $t_{\underline i} \hskip 2pt x_1^{i_1}\cdots x_\ell^{i_\ell}$ where 
${\underline i} = (i_1,\ldots,i_\ell)$ ranges over all the set ${\mathcal I}_\Gamma$ of all monomials inside $\Gamma$ and the corresponding $t_{\underline i}$ are indeterminates, forming a set $\underline t_\Gamma$. Denote this polynomial by $F_\Gamma$, which is in $k[\underline t_\Gamma,\underline x]$. 
%By construction, the polynomials obtained from $F_\Gamma$ by specializing the indeterminates $t_{\underline i}$ in $k$ are exactly the polynomials $P\in k[\underline x]$ with Newton polyhedron  $\Gamma$. We have
\vskip 1mm

\noindent
(11) {\it $F_\Gamma$ is generically irreducible and even satisfies this stronger irreducibility property: 
$F_\Gamma(\underline t_\Gamma^\ast,\underline x)$ is irreducible in $k[\underline x]$, for every specialization $\underline t_\Gamma^\ast$ such that 
%$t_{\underline i}^\ast \in k$ if $\underline i \notin {\mathcal V}_\Gamma$ and 
$t_{\underline i}^\ast \not=0$ if $\underline i \in {\mathcal V}_\Gamma$.}
\vskip 1mm

\noindent
The condition on the specialization $\underline t_\Gamma^\ast$ indeed assures that the Newton polyhedron of $F_\Gamma(\underline t_\Gamma^\ast,\underline x)$ is $\Gamma$. 
%Whence this combinatoric version of \hbox{Bertini-Noether.}
This yields this combinatorial version of the Bertini-Noether conclusion.

\begin{proposition}
Let $F(\tbar,\xbar) \in k[\tbar,\xbar]$ be a polynomial with Newton polyhedron $\Gamma$ 
{\rm (}as a polynomial in $\xbar${\rm )}. Denote its coefficients by $h_{\underline i}(\underline t)$ {\rm (}${\underline i}\in {\mathcal I}_\Gamma${\rm )}. If $\Gamma$ is  not summable then $F(\tbar\point,\xbar)$ is irreducible in $k[\xbar]$ for every specialization $\tbar\point$ such that 
$h_{\underline i}(\tbar\point)\not=0$ for each $\underline i \in {\mathcal V}_\Gamma$. Consequently, inequalities {\rm (6)}, 
{\rm (7)} from \S \ref{ssec:bertini-noether} hold with $(d^2-1) \deg_{t_i}(F)$ replaced by ${\rm card}({\mathcal V}_\Gamma) \deg_{t_i}(F)$.
\end{proposition}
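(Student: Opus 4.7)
The plan is to apply the Minkowski criterion (10) directly. Fix a specialization $\tbar\point$ with $h_{\underline i}(\tbar\point) \neq 0$ for all $\underline i \in {\mathcal V}_\Gamma$. Writing $F(\tbar,\xbar) = \sum_{\underline i \in {\mathcal I}_\Gamma} h_{\underline i}(\tbar)\, \xbar^{\underline i}$, the support of $F(\tbar\point,\xbar)$ is the subset $\{\underline i \in {\mathcal I}_\Gamma : h_{\underline i}(\tbar\point) \neq 0\}$, which by hypothesis contains every vertex of $\Gamma$ and sits inside ${\mathcal I}_\Gamma \subset \Gamma$. Taking convex hulls then yields $\Gamma(F(\tbar\point,\xbar)) = \Gamma$. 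Since $\Gamma$ is not summable, Minkowski's criterion (10) forces $F(\tbar\point,\xbar)$ to be irreducible in $k[\xbar]$. This also recovers observation (11) as the special case $F = F_\Gamma$.

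For the cardinality statements, the set of bad specializations is contained in the union $\bigcup_{\underline i \in {\mathcal V}_\Gamma} {\mathcal Z}(h_{\underline i})$, where each $h_{\underline i}$ satisfies $\deg_{t_j}(h_{\underline i}) \leq \deg_{t_j}(F)$. In the $s=1$ case this gives (6) with $(d^2-1)$ replaced by ${\rm card}({\mathcal V}_\Gamma)$, bounding the reducibility locus by $\sum_{\underline i \in {\mathcal V}_\Gamma} \deg(h_{\underline i}) \leq {\rm card}({\mathcal V}_\Gamma) \cdot \deg_{t_1}(F)$. For (7) one runs the same argument over $\overline{k(t_1,\ldots,t_{i-1},t_{i+1},\ldots,t_s)}$, viewing $F$ as a polynomial in $t_i$ with coefficients in that field; the Newton polyhedron relative to $\xbar$ is unchanged, and the vertex coefficients still vanish only at finitely many $t_i^\ast$, bounded in total by ${\rm card}({\mathcal V}_\Gamma) \cdot \deg_{t_i}(F)$.

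There is no genuine obstacle; the proof reduces to a single geometric observation --- that the convex hull of a subset of ${\mathcal I}_\Gamma$ containing all vertices of $\Gamma$ equals $\Gamma$ itself --- followed by an invocation of (10). The one point worth stating explicitly is that interior lattice points in ${\mathcal I}_\Gamma$ whose coefficients happen to vanish under specialization cannot shrink $\Gamma(F(\tbar\point,\xbar))$, precisely because the polyhedron is determined by its vertices.
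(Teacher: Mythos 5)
Your proof is correct and follows the same route the paper intends: the hypothesis on the vertex coefficients guarantees that the Newton polyhedron of the specialized polynomial is still $\Gamma$, and (10) then gives irreducibility; the cardinality bounds follow by counting zeros of the vertex coefficients $h_{\underline i}$ one variable at a time. The paper itself supplies no separate proof of this proposition — it is presented as a direct consequence of (10) and the observation preceding (11) — so your argument simply makes that implicit reasoning explicit, including the small but worth-noting point that deleting interior lattice points of ${\mathcal I}_\Gamma$ under specialization cannot change the polyhedron once the vertices survive.
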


%\noindent
%The polynomials $h_{\underline i}$ ($\underline i \in {\mathcal V}_\Gamma$) may have lower degrees than the Noether forms.

%In our context it yields to the following result, which can bee seen 
%as a combinatorics version of Noether's theorem.
%\begin{corollary}
%Let $F(\tbar,\xbar) \in k[\tbar,\xbar]$ be a polynomial.
%Let $\Gamma_F$ be the Newton polygon of $F$ (seen as a polynomial in the variables $\xbar$).
%Suppose that $\Gamma_F$ is not summable, then $F(\tbar,\xbar)$ is generically irreducible.
%More precisely there exists polynomials $h_i \in k[\tbar]$, $i = 1,\ldots,\nu$, 
%and $\deg h_i \le \deg_{\tbar} F$,
%such that:
%
%if for all $i = 1,\ldots,\nu$, $h_i(\tbar\point)\neq0$, then
%$F(\tbar\point,\xbar)$ is irreducible in $k[\xbar]$.
%\end{corollary}
%
%In fact the polynomials $h_i$ are just the coefficients corresponding to the monomials
%of the boundary of $\Gamma_F$. The degree of $h_i$ can be much more lower than the degree of Noether's form.

%----------------------------------------
\subsubsection{Bertini-Krull, Gao and our contribution} \label{ssec:bertini-krull-gao}
There exist efficient criteria and algorithms to decide whether a given convex set is 
summable, notably in a series of papers by Gao {\it et al} \cite{gao} \cite{GaoLau} \cite{ASGaoLau}. 
We 
use them in \S \ref{sec:monomial} to produce some new classes of generically irreducible polynomials.
%%(which as Gao explain can be seen as a generalization of Eisenstein's criterion \cite{}).
%
%\subsection{Review of the Bertini-Krull approach} \label{ssec:bertini-krull}
%
%%\subsubsection{Deformation by monomials (II)} \label{sssec:BoDeNa1}
%
%\subsubsection{Deformation by one polynomial} \label{sec:s=1}
%
%

The polynomial $F_\Gamma$ from (11) is linear in the parameters $t_{\underline i}$ and we will 
focus on this special situation. That is: we will assume $F$ is of the form
\vskip 1mm

\noindent
(13) \hskip 25mm $F = P(\underline x) -t_1Q_1(\underline x) - \cdots - t_s Q_s(\underline x)$
\vskip 1mm

\noindent
with $P,Q_1,\ldots,Q_s\in k[\underline x]$, which can viewed as a {\it deformation} of the polynomial $P$
by the polynomials $Q_1,\ldots,Q_s$. Here is an example of a result that can be deduced 
from Gao's criteria.
\vskip 2mm

\noindent
{\bf (Corollary \ref{cor:Gao})}
{\it Let $P, Q \in k[\xbar]$ such that 
\vskip 0,5mm

\noindent
{\rm ($\alpha$)} $\Gamma(P)$ is contained in a  hyperplane $H\subset \Rr^\ell$ not passing through the origin,
\vskip 0,5mm

\noindent
{\rm ($\beta$)} the coordinates of all the vertices of $\Gamma(P)$ are relatively prime,
\vskip 0,5mm

\noindent
{\rm ($\gamma$)} $Q(0,\ldots,0)\not=0$, $\Gamma(Q)\subset \Gamma_0(P)$ and no monomial of $Q$ is \hbox{a vertex of $\Gamma(P)$.}
\vskip 1mm

\noindent
Then the polynomial $F=P-tQ$ is generically irreducible and even has this stronger property: $P-t^\ast Q$ is irreducible in $k[\xbar]$ for every $t^\ast\in k\setminus\{0\}$.}
%Let $v_1,\ldots,v_k$ be the vertices of $\Gamma(P)$. 
%If $\gcd(v_1,\ldots,v_k) = 1$ then $\sp(P-t) = \varnothing$ or 
%$\sp(P-t) = \{0\}$.
\vskip 2mm

\noindent
For example, if $p,q,r$ are $3$ relatively prime positive integers, the polynomial
\vskip 2mm

\centerline{$F(t,x,y,z) = x^p+y^q + z^r + t\hskip 2pt (\hskip 2pt \sum_{\frac{i}{p}+\frac{j}{q}+\frac{k}{r}<1} a_{i,j,k} x^i y^j z^k$\hskip 2pt  )\hskip 2mm with $a_{0,0,0}\not=0$}
\vskip 2mm

\noindent
satisfies the conclusion of corollary \ref{cor:Gao} (as shown in example \ref{example:Gao2-1}, condition $a_{0,0,0}\not=0$ can in fact be removed).
Note further that the assumptions on $P$, $Q$ in corollary \ref{cor:Gao} only depend on the Newton polyhedrons $\Gamma(P)$ and $\Gamma(Q)$.

Before getting to applications of Gao's results, we review in \S \ref{ssec:bertini-krull} a more classical 
approach for polynomials  as in (13), based on the Bertini-Krull theorem. The special case $P-tQ$ has been much studied due to its connection with the indecomposability and the spectrum of the rational function $P/Q$. The even more special case $Q=1$ is of particular interest since a famous theorem of Stein provides an 
optimal bound for the cardinality of ${\rm sp}(P-t)$ which is sharper than the 
Bertini-Noether bound. This bound issue leads us to discuss to what extent 
the spectrum of a rational function can be prescribed. 
Finally we review and compare with Gao's results some results of \cite{BoDeNa1} 
concerned with the special case of (13) that $Q_1,\ldots,Q_s$ are monomials, for which the Bertini-Krull theorem is also a main ingredient.

%We divide this case into three subcases: 
%
%\paragraph{{\it Deformation by one polynomial.}} 
%Starting from the Bertini-Krull theorem we applied it to characterize generic irreducibility
%of the family $F = P-tQ$ in terms of the indecomposability of the fraction $\frac{P}{Q}$.
%The case of the deformation $F = P -t$, is of particular interest since Stein's theorem
%provides an optimal bound (and less more than the previous bounds).

%\paragraph{{\it Deformation by monomials (I)}} 
%We give several simple criteria to get families $P-tQ$   
%that are irreducible for all (but maybe one) values $t$, 
%especially when $Q$ is a monomial.
%These results are based on the Newton polyhedron of polynomials and
%combinatorial characterization of irreducibility by Gao.

%\paragraph{{\it Deformation by monomials (II)}} 
%When we deform of polynomial $P$ by several monomials $Q_1,\ldots, Q_s$,
%then Bertini-Krull theorem can be refined to give simple combinatorial 
%condition to get generic irreducibility.
%

%%%%%%%%%%%%%%%%%%%%%%%%%%%%%%%%%%%%%%%%%%%%%%%%%%%%%%%%%%%%%%%%%
%%%%%%%%%%%%%%%%%%%%%%%%%%%%%%%%%%%%%%%%%%%%%%%%%%%%%%%%%%%%%%%%%
\section{The Grothendieck arithmetico-geometric approach} \label{sec:grothendieck}

This section elaborates on \S \ref{ssec:grothendieck-our-contribution}. \S \ref{ssec:reduction_l=2} 
explains the reduction to the situation of $\ell=2$ indeterminates.  \S \ref{sec:bad-prime-divisor} 
introduces the bad prime divisor and the Grothendieck approach. Finally, \S \ref{ssec:conjoin} conjoins 
\S \ref{ssec:reduction_l=2} and \S \ref{sec:bad-prime-divisor}.

The general notation introduced in \S \ref{intro} is retained.

%----------------------------------------
\subsection{Reduction to the situation $\ell=2$} \label{ssec:reduction_l=2} The main result of 
this subsection is the following statement. For more generality, several polynomials $F_1,\ldots,F_h$
replace the single polynomial $F$ from \S \ref{intro}. 

The following additional notation is needed.
Given a polynomial $P$ in the indeterminates $\underline y= (y_1,\ldots, y_N)$ 
with coefficients in some integral domain and $B=(b_{ij})_{1\leq i,j\leq N}$ a $N\times N$-matrix with entries in the same domain, set
\vskip 2mm
\centerline{$\displaystyle P(B\cdot \underline y) = P(\sum_{j=1}^N b_{1j} y_j, 
\ldots, \sum_{j=1}^N b_{Nj} y_j)$}
\vskip 2mm

%We more generally consider several polynomials $F_1,\ldots,F_h$ ($h\geq 1$) 
%satisfying the same hypotheses as $F$. However to lighten notation, we mostly 
%work with one polynomial $F$ and insist on the several polynomial situation when 
%passing from one to several polynomials requires additional arguments.

%In our original situation, proposition \ref{prop:reduction} provides this statement.

\begin{theorem} \label{cor:reduction} Let $F_1,\ldots, F_h \in k[\underline t,\underline x]$, assumed to be
irreducible in $\overline{k(\underline t)}[\underline x]$ 
and $\kappa \subset k$ be an infinite subfield. There is a matrix $B=(b_{ij})_{i,j}\in {\rm GL}_{\ell}(\kappa)$ such that the polynomial 
$F_i(\underline t, B\cdot \underline x)$, which is of the form
\vskip 0mm

\centerline{$\displaystyle F_i(t_1,\ldots,t_s, \sum_{j=1}^\ell b_{1j} x_j, \ldots, 
\sum_{j=1}^\ell b_{\ell j} x_j)$\hskip 5mm {\rm (}$i=1,\ldots, h${\rm )} }

\vskip 1mm

\noindent
is irreducible in $\overline{k(\underline t,x_1,\ldots,x_{\ell-2})}[x_{\ell-1},x_{\ell}]$ 
and satisfies the degree condition
 $\deg_{x_{\ell-1},x_{\ell}}(F_i(\underline t, B\cdot \underline x)) 
 = \deg_{\underline x}(F_i(\underline t,\underline x))$.
\end{theorem}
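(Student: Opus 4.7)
The plan is to exhibit a non-zero polynomial $R \in k[B_{jk}]$ such that every $B \in \mathrm{GL}_\ell$ with $R(B) \neq 0$ fulfills all conclusions of the theorem (for $F_1, \ldots, F_h$ simultaneously), and then to produce a $\kappa$-rational point in $\{R \neq 0\}$ using the fact that $\kappa$ is infinite. By intersecting the corresponding conditions for each $F_i$, one reduces to the case $h = 1$; write $F = F_1$. Assume $\ell \geq 3$, the case $\ell = 2$ being trivial. The degree condition $\deg_{x_{\ell-1}, x_\ell}(F(\underline t, B \cdot \underline x)) = \deg_{\underline x}(F)$ holds iff the leading form of $F$ in $\underline x$ does not vanish at the last two rows of $B$: this is a non-empty Zariski open condition defined over $k$.

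For the irreducibility condition, let $K = \overline{k(\underline t)}$. The Noether-form formalism recalled in \S \ref{intro} translates the irreducibility of $F(\underline t, B \cdot \underline x)$ in $\overline{K(x_1, \ldots, x_{\ell-2})}[x_{\ell-1}, x_\ell]$ (given the degree condition) into the non-vanishing of at least one of finitely many polynomial forms in the coefficients of $F(\underline t, B \cdot \underline x)$, viewed as a polynomial in $x_{\ell-1}, x_\ell$ over $k[\underline t, B_{jk}, x_1, \ldots, x_{\ell-2}]$. For $B$ with indeterminate entries, these forms yield polynomials $N_j \in k[\underline t, B_{jk}, x_1, \ldots, x_{\ell-2}]$; the irreducibility holds at a specific $B^*$ provided at least one coefficient $Q \in k[B_{jk}]$ of some $N_j$ (viewed as polynomial in the $\underline t$ and $x_1, \ldots, x_{\ell-2}$) satisfies $Q(B^*) \neq 0$.

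The crux is to show some such $Q$ is non-zero, equivalently (by Noether once more) that for the universal $B$, $F(\underline t, B \cdot \underline x)$ is irreducible in $\overline{K(B_{jk}, x_1, \ldots, x_{\ell-2})}[x_{\ell-1}, x_\ell]$. This is a Bertini-type statement about the irreducible hypersurface $V = \{F = 0\} \subset \mathbb{A}^\ell_K$ (of dimension $\ell - 1 \geq 2$): a generic $(\ell-2)$-dimensional linear projection of $V$ has geometrically irreducible generic fiber. It follows from iterating the classical Bertini theorem (a generic hyperplane section of a geometrically irreducible variety of dimension $\geq 2$ is geometrically irreducible) $\ell - 2$ times, each iteration passing to a larger function field but keeping the section irreducible, until the ambient is $\mathbb{A}^2$ and the variety a curve.

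Finally, define $R \in k[B_{jk}]$ as the product of $\det$, the degree-condition polynomial, and a non-zero $Q$ as above. Writing $R = \sum_j \mu_j R_j$ with $(\mu_j)$ a $\kappa$-linearly independent family in $k$ and $R_j \in \kappa[B_{jk}]$, the non-vanishing $R \not\equiv 0$ forces some $R_j \not\equiv 0$; since $\kappa$ is infinite, $R_j$ does not vanish on all of $\kappa^{\ell^2}$, so there exists $B^* \in \kappa^{\ell^2}$ with $R(B^*) \neq 0$, giving $B^* \in \mathrm{GL}_\ell(\kappa)$ with all the desired properties. The main obstacle is the Bertini-type step: while classical, it requires careful setup (especially in positive characteristic) to guarantee \emph{geometric} (not merely set-theoretic) irreducibility of the generic fiber, which is exactly what the Noether forms detect.
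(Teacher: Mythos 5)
Your proof is correct in outline and reaches the result by a genuinely different route from the paper. The paper's proof of Theorem \ref{cor:reduction} passes through the abstract Proposition \ref{prop:reduction}: one iterates a single-parameter result (a $\kappa$-coefficient version of \cite[proposition 1]{salah_lorenzini}, itself built on the Matsusaka--Zariski theorem \cite[proposition 10.5.2]{FrJa}), peeling off one auxiliary variable at a time and composing the resulting matrices $B_1, B_2, \ldots$. Your approach is instead ``global'': you encode the desired conclusion as the non-vanishing of a single polynomial $R\in k[b_{ij}]$ via the Noether forms, you show $R\not\equiv 0$ by exhibiting a point (the universal $B$) at which all conclusions hold, and you then extract a $\kappa$-rational point of $\{R\neq 0\}$ by writing $R=\sum_j\mu_j R_j$ over a $\kappa$-linearly independent family $(\mu_j)\subset k$ with $R_j\in\kappa[b_{ij}]$. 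That descent-to-$\kappa$ trick is clean and works because $R(B^\ast)=\sum_j\mu_j R_j(B^\ast)$ vanishes for $B^\ast\in\kappa^{\ell^2}$ only if all $R_j(B^\ast)$ do. What the paper's iterative approach buys is that it avoids the one genuinely delicate point in yours: verifying that the universal $(B,x_1,\ldots,x_{\ell-2})$-parametrization dominates the space of affine $2$-planes so that ``universal $B$'' really yields a \emph{generic} plane section, to which Matsusaka--Zariski (valid in any characteristic when the coefficients are transcendental, which is exactly the substitute you need for naive Bertini over an uncountable field in characteristic $p$) can be applied $\ell-2$ times. You flag this as the main obstacle, and indeed it is the step your sketch leaves unproved; filling it in amounts to redoing essentially the Matsusaka--Zariski / Najib argument that the paper invokes through Proposition \ref{prop:reduction}.

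One small slip: the degree condition $\deg_{x_{\ell-1},x_\ell}(F(\underline t,B\cdot\underline x))=\deg_{\underline x}F$ is equivalent to the leading form of $F$ not vanishing identically on the span of the last two \emph{columns} of $B$ (these are the direction vectors of the coordinate $2$-plane $\{x_1=\cdots=x_{\ell-2}=0\}$ after the change of variables), not the last two rows. This does not affect the argument, since in either reading the condition is a non-empty Zariski-open condition on $B$ defined over $k$.
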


\begin{remark} \label{remark:reduction} If $F_1,\ldots, F_h$ are only irreducible in $k(\underline t)[\underline x]$, 
the same conclusion holds with these adjustments: $B$ should be a matrix 
$B=(b_{ij})_{i,j}\in {\rm GL}_{s+\ell}(\kappa)$ that applies to the $s+\ell$ indeterminates $t_1,\ldots,t_s,x_1,\ldots,x_\ell$; the resulting polynomial 
$F_i(B\cdot (\underline t, \underline x))$ is of the form
\vskip 1,5mm

\centerline{$\displaystyle F_i(\hskip 1pt \beta_1(\underline x) \hskip -2pt + \hskip -2pt \tau_1(\underline t)\hskip 1pt, \hskip 1pt \ldots, 
\hskip 1pt \beta_{s+\ell}(\underline x)\hskip -2pt +\hskip -2pt \tau_{s+\ell}(\underline t)\hskip 1pt)$\hskip 5mm ($i=1,\ldots, h$)}
\vskip 1,5mm

%\centerline{$\displaystyle F_i(\sum_{j=1}^\ell b_{1j} x_j + \tau_1(\underline t), \ldots, 
%\sum_{j=1}^\ell b_{\ell j} x_j+\tau_{s+\ell}(\underline t))$\hskip 5mm ($i=1,\ldots, h$)}
%\vskip 1mm

\noindent
\hbox{for some $\kappa$-linear forms $\beta_1(\underline x), \ldots, \beta_{s+\ell}(\underline x)\in \kappa[\underline x]$ 
and $\tau_1(\underline t),\ldots,\tau_{s+\ell}(\underline t)
\in \kappa[\underline t]$. }
%\vskip 2mm
%
%is irreducible in $\overline{k(\underline t,x_1,\ldots,x_{\ell-2})}[x_{\ell-1},x_{\ell}]$ 
%{\rm ({\it a fortiori} in $\overline{k(\underline t)}[\underline x]$)} 
%and satisfies the degree condition
% $\deg_{x_{\ell-1},x_{\ell}}(F_i(B\cdot (\underline t, \underline x))) 
% = \deg_{\underline t,\underline x}(F_i(\underline t,\underline x))$.
\end{remark}

% %
% %Assume as we do in this paper that $F_1,\ldots, F_h$ are irreducible in 
% $\overline{k(\underline t)}[\underline x]$. Set $T=x_{\ell-1}$ and $Y=x_{\ell}$, 
% $A= k[\underline t,x_1,\ldots,x_{\ell-2}]$, $K={\rm Frac}(A)$. 
% The $h$ polynomials (given by proposition \ref{cor:reduction} (a))
% %\vskip 1mm
% %
% %\centerline{$F_1(\underline t, B\cdot \underline x),\ldots,F_h(\underline t, B\cdot \underline x)$}
% %\vskip 1mm
% %
% %
% %are in $A[T,Y]$ and are irreducible in $\overline K[T,Y]$. Given such polynomials, 
% the next subsection will consider the polynomials obtained by specializing 
% the parameters in $A$. Back in our original situation, we will only consider 
% specializations that leave the indeterminates $x_1,\ldots,x_{\ell-2}$ unchanged 
% and send $t_i$ to some $t_i^\ast\in k$, $i=1,\ldots,s$. 
% The resulting specialized polynomials will be of the form
% %\vskip 2mm
% %
% %\centerline{$\displaystyle F_i(t_1^\ast,\ldots,t_s^\ast, \sum_{j=1}^\ell b_{1j} x_j, 
% \ldots, \sum_{j=1}^\ell b_{\ell j} x_j)$ }
% %\vskip 1mm
% %
% %The main conclusion will be that these polynomials are irreducible in 
% $\overline{k(x_1,\ldots,x_{\ell-2})}[T,Y]$ and so {\it a fortiori} in 
% $\overline{k}[x_1,\ldots,x_{\ell}]$. As the matrix $B$ is invertible, 
% so will be the polynomials $F_i(\underline t^\ast,\underline x)$, $i=1,\ldots,h$. 
% %

The main tool in the proof of theorem \ref{cor:reduction} is proposition \ref{prop:reduction} below. 
%[\textbf{Agenda :} compl\'eter les r\'ef\'erences du paragraphe ci-dessus.***]
%\vskip 2mm
Let $A$ be an integral domain with fraction field $K$ and let $\underline y= \{y_1,\ldots, y_N\}$, 
$\underline z = \{z_1,\ldots,z_M\}$ be two sets of indeterminates with $N\geq 2$, $M\geq 1$. 

\begin{proposition} \label{prop:reduction} Let 
$\kappa \subset A$ be an infinite subfield and $P_1,\ldots,P_h \in 
A[\underline z, \underline y]$ be $h$ polynomials, irreducible in 
$\overline K[\underline z, \underline y]$. There exists a matrix 
$B=(b_{ij})_{i,j}\in {\rm GL}_{M+N}(\kappa)$ such that  for $i=1,\ldots, h$,
\vskip 1mm

\noindent 
{\rm (a)} the polynomial $P_i(B\cdot (\underline z, \underline y))$, 
which is of the form
\vskip 1,5mm

\centerline{$\displaystyle P_i\hskip 1pt (\hskip 1pt \beta_1(\underline y) \hskip -2pt + \hskip -2pt \tau_1(\underline z)\hskip 1pt, \hskip 1pt \ldots, 
\hskip 1pt \beta_{M+N}(\underline y)\hskip -2pt +\hskip -2pt \tau_{M+N}(\underline z)\hskip 1pt)$ }
\vskip 1,5mm

%\centerline{$\displaystyle P_i(\sum_{j=1}^N b_{1j} y_j + \tau_1(\underline z), \ldots, 
%\sum_{j=1}^N b_{M+N \hskip 1pt j} \hskip 2pt y_j+\tau_{M+N}(\underline z))$ }
%\vskip 0mm

\hskip 16mm for some $\kappa$-linear forms $\beta_1(\underline y), \ldots, \beta_{M+N}(\underline y)\in \kappa[\underline y]$ 
%and $\tau_1(\underline z),\ldots,\tau_{M+N}(\underline z)
%\in \kappa[\underline t]$,
%for some $\kappa$-linear forms $\tau_1(\underline z),\ldots,\tau_{M+N}(\underline z)\in \kappa[\underline z]$, 

\hskip 16mm and some $\kappa$-linear forms 
%$\beta_1(\underline y), \ldots, \beta_{M+N}(\underline y)\in k[\underline y]$ and 
$\tau_1(\underline z),\ldots,\tau_{M+N}(\underline z)
\in \kappa[\underline z]$,

\vskip 3mm

\noindent
is irreducible in $\overline{K(\underline z)}[\underline y]$,
\vskip 2mm

\noindent 
{\rm (b)} furthermore $\deg_{\underline y}(P_i(B\cdot (\underline z, \underline y))) 
= \deg_{\underline z,\underline y}(P_i)$.
\end{proposition}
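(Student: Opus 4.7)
The plan is to exhibit $B$ through a genericity argument based on Bertini's theorem, then specialize to $\kappa$ using that $\kappa$ is infinite.

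First, since $\kappa$ is infinite, any polynomial in $\overline{K}[b_{ij}]_{1 \leq i,j \leq M+N}$ that vanishes identically on $\kappa^{(M+N)^2}$ must be zero, so every nonempty Zariski-open subset of $\mathbb{A}^{(M+N)^2}_{\overline{K}}$ meets $\kappa^{(M+N)^2}$. Hence it suffices to produce, for each single $P = P_i$, a nonempty Zariski-open subset $\mathcal{U}_i \subset \mathbb{A}^{(M+N)^2}_{\overline{K}}$ all of whose points $B$ satisfy (a) and (b) for $P_i$; the common matrix $B$ is then drawn from $\kappa^{(M+N)^2} \cap \bigcap_{i=1}^{h} \mathcal{U}_i \cap \{\det B \ne 0\}$.

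Fix $P = P_i$. By absolute irreducibility, the hypersurface $V(P) \subset \mathbb{A}^{M+N}_{\overline{K}}$ is geometrically integral of dimension $M+N-1$. For $B \in \mathrm{GL}_{M+N}(\overline{K})$, the polynomial $P(B \cdot (\underline{z},\underline{y}))$ is irreducible in $\overline{K(\underline{z})}[\underline{y}]$ if and only if the linear projection
\[
\pi_B \colon V(P) \longrightarrow \mathbb{A}^M_{\overline{K}}, \qquad u \longmapsto \bigl((B^{-1}u)_1,\ldots,(B^{-1}u)_M\bigr),
\]
has geometrically integral generic fiber: indeed, $B^{-1}$ induces an isomorphism $V(P) \cong V(P(B\cdot{-}))$ under which $\pi_B$ corresponds to the projection onto the first $M$ coordinates. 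The hypothesis $N \geq 2$ gives $\dim V(P) = M+N-1 \geq M+1$, so this generic fiber is of positive dimension. The key input is then the following consequence of Bertini's theorem on generic hyperplane sections, obtained by iteration: for a geometrically integral affine variety of dimension $\geq M+1$, the set of $M$-tuples of linear forms whose associated projection to $\mathbb{A}^M$ has geometrically integral generic fiber is a nonempty Zariski-open subset of the parameter space. As $B$ varies over $\mathrm{GL}_{M+N}(\overline{K})$, the first $M$ rows of $B^{-1}$ sweep out an open subset of $M$-tuples of linear forms on $\mathbb{A}^{M+N}$, which produces the nonempty open $\mathcal{U}_i$ for (a).

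For (b), set $d = \deg(P)$. The condition $\deg_{\underline{y}}(P(B \cdot (\underline{z},\underline{y}))) = d$ amounts to requiring the top-degree form of $P(B \cdot \mathbf{w})$ to contain a pure $\underline{y}$-monomial of degree $d$, equivalently that the leading form of $P$ does not vanish identically on the $N$-dimensional subspace of $\mathbb{A}^{M+N}$ spanned by the last $N$ rows of $B$. Since the leading form of $P$ is a nonzero homogeneous polynomial of degree $d$, this cuts out a further nonempty Zariski-open condition on $B$, which is intersected with $\mathcal{U}_i$.

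The main obstacle is the Bertini step: verifying the assertion that a generic linear projection of a geometrically integral affine variety of dimension $\geq M+1$ onto $\mathbb{A}^M$ has geometrically integral generic fiber, and that the locus of good matrices $B$ is Zariski-open. Both statements are classical, but a clean proof in the affine formulation required here (with the matrix $B$ mixing the $\underline{z}$ and $\underline{y}$ variables) requires some bookkeeping.
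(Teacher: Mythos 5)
Your outline is correct and the strategy is sound, but it genuinely departs from the paper's route. The paper's proof is an induction on the number of ``eliminated'' variables: it first upgrades Najib's one-parameter result ({[}proposition~1{]} of the cited {\it salah\_lorenzini} paper, itself proved via the Matsusaka--Zariski theorem, Fried--Jarden~Prop.~10.5.2) to allow the matrix to have entries in any infinite subfield $\kappa \subset A$, then applies this repeatedly: a first matrix $B_1$ absorbs $z_1$, a second $B_2$ absorbs $z_2$ over the new base $k(z_1)$, and so on $M$ times, the final $B$ being the product $B_1 B_2 \cdots B_M$. Your proposal instead handles all $M$ variables in a single step: parametrize matrices by $\mathbb{A}^{(M+N)^2}$, show (via generic linear projection of the geometrically integral hypersurface $V(P_i)$) that the locus of good $B$ contains a nonempty Zariski-open, intersect the finitely many such opens and the determinant locus, and use that an infinite subfield $\kappa$ meets any nonempty open of $\mathbb{A}^{(M+N)^2}_{\overline{K}}$. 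Both arguments ultimately rest on Bertini-type inputs --- the paper outsources this to Matsusaka--Zariski via Najib, while you invoke iterated generic hyperplane sections directly --- so the real difference is ``iterate a one-cut lemma'' vs.~``do $M$ cuts at once.'' Your version is more geometric and avoids the iteration, at the cost of having to establish both pieces you flag: that the generic fiber of a generic linear projection from a geometrically integral affine variety of dimension $\geq M+1$ to $\mathbb{A}^M$ is geometrically integral, and that the resulting good locus in $B$-space is genuinely open rather than merely dense constructible. The second point does hold and can be secured cleanly by applying the Bertini--Noether specialization theorem to the ``universal'' polynomial $P_i(\tilde B \cdot (\underline z, \underline y))$ over $K(\tilde B,\underline z)$ with $\tilde B$ a matrix of indeterminates, which produces the nonzero ``exceptional polynomial'' $c(\tilde B, \underline z)$ whose non-vanishing of some $\underline z$-coefficient cuts out the open set. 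Two small slips: in your degree argument, the relevant $N$-dimensional subspace is spanned by the last $N$ \emph{columns} of $B$, not rows (since $\deg_{\underline y}$ detects the image of $(0,\underline y)$ under $B$); and you should note, when invoking ``infinite field meets every nonempty open,'' that the polynomial cutting out the bad locus has coefficients in $\overline{K}$, not $\kappa$, so one needs the standard basis-of-$\overline{K}$-over-$\kappa$ decomposition to conclude.
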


\begin{remark}
There are several variants of proposition \ref{prop:reduction} in the literature: \cite[ch5, theorem 3d]{schmidt}
for $1$ polynomial ($h=1$) and $1$ parameter ($M=1$), \cite[lemma 7]{kaltofen} for $h=1$, \cite[proposition 1]{salah_lorenzini} for $M=1$.
Our version has several polynomials and several parameters and the produced 
matrix has coefficients in any given infinite subfield of the ring $A$. 
%several 
%parameters (denoted by $z_1,\ldots,z_M$ below) and 
%although there are usually given in the situation of a single polynomial.
\end{remark}

Theorem \ref{cor:reduction} corresponds to the following special case of proposition 
\ref{prop:reduction}: $\underline z=(x_1,\ldots,x_{\ell-2})$, 
%\noindent
$\underline y = (x_{\ell-1}, x_\ell)$ and $A=k[\underline t]$ while remark \ref{remark:reduction} 
corresponds to the special case:
%\noindent 
$\underline z=(t_1,\ldots,t_s, x_1,\ldots,x_{\ell-2})$, 
$\underline y = (x_{\ell-1}, x_\ell)$ and $A=k$.

\begin{proof}[Proof of proposition \ref{prop:reduction}]
Proposition \ref{prop:reduction} is a generalization of \cite[proposition 1]{salah_lorenzini}, 
which corresponds to the special situation: $\underline z=z_1$ and $\kappa=A=K$.
% of proposition \ref{prop:reduction}. 

First we generalize \cite[proposition 1]{salah_lorenzini} to the situation 
``$\kappa \subset A$ infinite'' (but still with  $\underline z=z_1$). 
This only requires to adjust the proof of \cite{salah_lorenzini}: the matrices 
that are constructed there with coefficients in $K$ can be chosen with 
coefficients in $\kappa$, the main point being that $\kappa$ is infinite. 
The core of the proof is the Matsusaka-Zariski theorem \cite[proposition 10.5.2]{FrJa}.

This generalized \cite[proposition 1]{salah_lorenzini}, applied to the situation of 
proposition \ref{prop:reduction}, provides a matrix $B_1\in {\rm GL}_{M+N}(\kappa)$ 
such that $P_i(B_1\cdot (\underline z, \underline y))$ is irreducible 
in $\overline{k(z_1)}[z_2,\ldots,z_M,y_1,\ldots,y_N]$ and satisfies the degree condition 
\vskip 1mm

\centerline{$\deg_{z_2,\ldots,z_M,\underline y}(P_i(B_1\cdot (\underline z, \underline y))) 
= \deg_{\underline z, \underline y}(P_i)$ ($i=1,\ldots, h$).} 

\vskip 1mm

\noindent
Apply next the generalized \cite[proposition 1]{salah_lorenzini} to the 
polynomials $P_i(B_1\cdot (\underline z, \underline y))$, $i=1,\ldots, h$, 
viewed as polynomials in the indeterminates $z_2,\ldots,z_M,\underline y$ 
and to the same infinite 
subfield $\kappa$ (of the coefficient field $k(z_1)$ of these polynomials). 
This  provides a matrix $B_2\in 
{\rm GL}_{M+N-1}(\kappa)$ which we make a matrix $B_2\in 
{\rm GL}_{M+N}(\kappa)$ by letting it be the identity on the missing coordinate 
$z_1$ and is such that the polynomial $P_i(B_1 B_2 \cdot (\underline z, \underline y))$ 
is irreducible in $\overline{k(z_1,z_2)}[z_3,\ldots,z_M,\underline y]$ and satisfies 
$\deg_{z_3,\ldots,z_M,\underline y}(P_i(B_1 B_2 \cdot (\underline z, \underline y))) 
= \deg_{\underline z, \underline y}(P_i)$ ($i=1,\ldots, h$). Iterating this process 
leads to the desired statement. \end{proof}

%----------------------------------------
\subsection{The bad prime divisor and the Grothendieck approach}  
\label{sec:bad-prime-divisor} 
This subsection is based on \cite{DeRedSpePol}. 
The context there is that of polynomials $F\in A[T,Y]$ with coefficients 
in a Dedekind domain $A$, with $A=\Zz$ and $A=k[t]$ as typical examples. Here we
focus on the situation $A=k[\underline t]$, which is not a Dedekind domain if $s\geq 2$. 
We can however specialize one by one the parameters $t_1,\ldots,t_s$ so as to work at each step with the ring $A= k(t_1,\ldots,t_i)[t_{i+1}]$ which is a Dedekind domain (see \S \ref{ssec:successive_specializations}).
\vskip 1mm

Let $A$ be an integral domain with fraction field $K$ and $F\in A[T,Y]$ be a polynomial, 
irreducible in $\overline K[T,Y]$. Up to switching $Y$ and $T$, one may assume that 
$n=\deg_Y(F)\geq 1$.
Assume further $n\geq 2$; the remaining case $n=1$ is trivial (see remark \ref{rem:n=1}).
Set $m=\deg_T(F)$; $m\geq 1$. Assume $A$ is of characteristic $0$ or $p>(2n^2-n) m$. 
%and that 
%$F$ is separable as a polynomial in $Y$ ({\it i.e.}, has only simple roots in $\overline{K(T)}$).
Paragraphs \S \ref{ssec:monic_reduction}-\ref{ssec:central-result} recall from \cite{DeRedSpePol}
the construction of the bad prime divisor and its Bertini-Noether property. 

\subsubsection{Preliminary reduction to a monic polynomial} \label{ssec:monic_reduction}  
First reduce to the situation where $F$ is monic in $Y$ by replacing 
\vskip 1mm
\centerline{$F(T,Y)=F_0 Y^n+ F_1 Y^{n-1} + \cdots + F_n$}
\vskip 1mm  
\centerline{with $F_0,F_1,\ldots, F_n \in A[T]$.}
\vskip 1mm  

\centerline{{\rm by}}
\centerline{$\displaystyle Q(T,Y)=F_0^{n-1} F(T,\frac{Y}{F_0})=Y^n+ F_1 Y^{n-1} +\cdots + F_0^{n-1}F_n$}
\vskip 1mm

\noindent
We have $\deg_Y(Q) =n$ and $\deg_T(Q)\leq nm$, so $p> (2\deg_Y(Q)-1)\deg_T(Q)$
in the case $p>0$. Consequently the polynomial $Q$, as a polynomial in $Y$, has only simple 
roots in $\overline{K(T)}$ 
and so do the irreducible factors in $K[T]$ of its discriminant w.r.t. $Y$, 
as polynomials in $T$; they have only simple roots in $\overline K$.
%, have only simple roots (in $\overline{k(T)}$ and $\overline k$ respectively)
This is a starting hypothesis in \cite{DeRedSpePol}.

\subsubsection{Definition of the bad prime divisor} \label{ssec:bad_prime-divisor}  
Assume from now on, in addition to $F\in A[T,Y]$, irreducible in $\overline K[T,Y]$, that $F$ is monic in $Y$, $n=\deg_Y(F)\geq 2$, 
$m=\deg_T(F)\geq 1$, that the characteristic of $A$ is $0$ or $p> (2n-1)m$ and that $A$ is integrally closed.

Denote the {\it discriminant} of $F$ relative to $Y$ by
\vskip 1mm
\centerline{$\Delta_F = {\rm disc}_Y(F)$} 
\vskip 1mm

\noindent
We have 
$\Delta_F\in A[T]$ and $\Delta_F\not=0$. Consider the {\it reduced discriminant}: 
\vskip 2mm

\centerline{$\Delta_F^{\rm red} = (\Delta_{F,0})^\rho \prod_{i=1}^\rho (T-\tau_i)$}
\vskip 2mm

\noindent
where $\Delta_{F,0}$ is the leading coefficient of $\Delta_F$ and $\tau_1,\ldots,\tau_\rho$ 
are the distinct roots of $\Delta_F$ in $\overline K$. From \cite[lemma 2.1]{DeRedSpePol}, 
we have $\Delta_F^{\rm red} \in A[T]$ and

\vskip 2mm

\centerline{$\displaystyle \Delta_F^{\rm red} = \hskip 1mm \Delta_{F,0}^{\rho-1} 
\hskip 1mm\frac{\Delta_F}{\hbox{\rm gcd}(\Delta_F,\Delta_F^\prime)}$}
\vskip 2mm

\noindent
where the {\rm gcd} is calculated in the ring $K[T]$ and made to be monic by multiplying by the suitable non-zero constant. Furthermore the discriminant
\vskip 2mm

\centerline{$\displaystyle \hbox{\rm disc}(\Delta^{\rm red}_F)
= (\Delta_{F,0})^{2\rho(\rho-1)} \prod_{1\leq i\not=j\leq \rho} (\tau_j-\tau_i)$}
\vskip 1mm

\noindent
is an element of $A$ and is non-zero as by construction $\Delta^{\rm red}_F(T)$ 
has no multiple root in $\overline K$. Define then an element ${\mathcal B}_F$ by
\vskip 1mm

\centerline{$\displaystyle {\mathcal B}_F = \Delta_{F,0} \cdot \hbox{\rm disc}(\Delta^{\rm red}_F)$}
\vskip 1mm

\noindent
We have ${\mathcal B}_F\in A$ and ${\mathcal B}_F\not=0$.

\begin{definition}
The maximal ideals ${\mathfrak p}\subset A$ that contain ${\mathcal B}_F$ are 
called the {\it bad primes} of $F\in A[T,Y]$ and ${\mathcal B}_F$ is called the {\it bad prime 
divisor}. Maximal ideals ${\mathfrak p}\subset A$ that are not bad are said to be {\it good}.
\end{definition}

\begin{remark}\label{rem:n=1} In the case $\deg_Y(P)=1$, $\deg_T(P)\leq 1$, the construction
leads to ${\mathcal B}_F=1$. All maximal ideals ${\mathfrak p}\subset A$ are good and the main result,
theorem  \ref{thm:reduction_property} below, trivially holds.
 \end{remark}

%The polynomial ${\mathcal B}_F$ is directly computable from the coefficients of $F$ through elementary operations, starting with the discriminant $\Delta_F\in k[\underline t][T]$ of $F$ relative to $Y$ (\S \ref{sec:bad-prime-divisor}).
%This general bound for the degree of ${\mathcal B}_{F}$ follows:
%\vskip 1mm
%
%\noindent
%{\rm (8)} \hskip 18mm $\deg_{t_i}({\mathcal B}_{F}) \leq  16 d^5 \deg_{t_i}({F})$, ($i=1,\ldots,s$).
%\vskip 1mm
%
%\noindent
%It is not as good as (4); the advantage of ${\mathcal B}_F$ lies in its full explicitness (which may lead to better bounds in specific cases) and in its arithmetic meaning, where the name ``bad prime divisor'' originates: if ${\mathcal B}_F(\underline t^\ast) \not= 0$, then the distinct roots (in $\overline{k(\underline t)}$) of $\Delta_F$ remain defined and distinct after specialization of $\underline t$ to $\underline t^\ast\in k^s$. 
%%We call ${\mathcal B}_F$ the {\it bad prime divisor} of $F$. 

%\begin{remark} \label{rem:subring}
%If $A_0\subset A$ is a subring, integrally closed, with fraction field $k_0\subset k$, 
%the bad prime divisors relative to $A_0$ and $A$ coincide.
%\end{remark}

\subsubsection{The main result} \label{ssec:central-result}
In addition to the assumptions of \S \ref{ssec:bad_prime-divisor}, assume that $A$ is a Dedekind domain. Let ${\mathcal G}$ be the Galois group of the 
splitting field of $F$ over $K(T)$.
%; ${\mathcal G}$ is called the {\it monodromy group} 
%of $F$, it is a transitive subgroup of $S_n$. 

If ${\mathfrak p}\subset A$ is a prime ideal, denote the residue field $A/{\mathfrak p}$ 
by $\kappa_{\mathfrak p}$, the reduction map by $s_{\mathfrak p}: A\rightarrow \kappa_{\mathfrak p}$, 
the localized ring of $A$ by ${\mathfrak p}$ by $A_{\mathfrak p}$ and the polynomial obtained 
by reducing the coefficients of $P$  by $s_{\mathfrak p}(P)$.

\begin{theorem}[theorem 2.6 of \cite{DeRedSpePol}] \label{thm:reduction_property} 
Let ${\mathfrak p}\subset A$ be a good prime of $F$ such that
 $|{\mathcal G}|\notin {\mathfrak p}$.  Then we have these two conclusions:
\vskip 1mm

\noindent
{\rm (Good Behaviour)} We have $\displaystyle {\mathcal B}_{s_{\mathfrak p}(F)} 
= s_{\mathfrak p}({\mathcal B}_F)\not=0$.
\vskip 1mm

%\noindent
%{\rm (Good Behaviour)} We have $\displaystyle \left\{
%\begin{matrix}
%\Delta_{s_{\mathfrak p}(F)}^{\rm red}=s_{\mathfrak p}(\Delta^{\rm red}_{F}) \not= 0\hfill \cr
%{\mathcal B}_{s_{\mathfrak p}(F)} = s_{\mathfrak p}({\mathcal B}_F)\not=0 \hfill \cr
%\end{matrix}
%\right.
%$
%\vskip 2mm
%

\noindent
{\rm (Good Reduction)}  The polynomial $s_{\mathfrak p}(F)$ 
is ir\-re\-du\-cible in $\overline{\kappa_{\mathfrak p}}[T,Y]$.
% \footnote{This polynomial is also separable as a consequence of the assumption o}
%and of monodromy group ${\mathcal G}$.}
\end{theorem}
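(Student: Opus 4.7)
The plan has two separate parts, \emph{Good Behaviour} and \emph{Good Reduction}, the first being computational and the second genuinely requiring Grothendieck's machinery.

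For \textbf{Good Behaviour}, I would argue by tracking how each ingredient in the definition of ${\mathcal B}_F$ behaves under reduction modulo ${\mathfrak p}$. Since $\Delta_{F,0}$ divides ${\mathcal B}_F$, the good-prime assumption gives $\Delta_{F,0}\notin{\mathfrak p}$, and since $F$ is monic in $Y$ the resultant formula shows $s_{\mathfrak p}(\Delta_F)=\Delta_{s_{\mathfrak p}(F)}$, with unchanged $T$-degree and unchanged leading coefficient. The factor $\operatorname{disc}(\Delta_F^{\rm red})$ dividing ${\mathcal B}_F$ is (up to a power of $\Delta_{F,0}$) the product $\prod_{i\ne j}(\tau_j-\tau_i)$ of differences of the distinct roots $\tau_1,\ldots,\tau_\rho\in\overline K$ of $\Delta_F$, so $\operatorname{disc}(\Delta_F^{\rm red})\notin{\mathfrak p}$ means exactly that these roots reduce to $\rho$ \emph{distinct} elements of $\overline{\kappa_{\mathfrak p}}$. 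Hence $s_{\mathfrak p}(\Delta_F^{\rm red})=\Delta_{s_{\mathfrak p}(F)}^{\rm red}$, and applying $\operatorname{disc}$ on both sides gives $s_{\mathfrak p}({\mathcal B}_F)={\mathcal B}_{s_{\mathfrak p}(F)}$. This element is nonzero by hypothesis.

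For \textbf{Good Reduction}, I would interpret $F$ geometrically. Let $X\to\mathbb{A}^1_K$ be the normalization of the plane curve $\{F=0\}$ viewed over the $T$-line; the branch locus $B$ is contained in $\mathcal{Z}(\Delta_F^{\rm red})\subset\mathbb{A}^1_K$. By \emph{Good Behaviour}, the closure of $B$ in $\mathbb{A}^1_{A_{\mathfrak p}}$ is étale over $\operatorname{Spec}(A_{\mathfrak p})$ in a neighbourhood of ${\mathfrak p}$: the branch points stay distinct and none collides with $\infty$. This is the classical setup of good reduction of covers: the family $X\to\mathbb{A}^1_{A_{\mathfrak p}}\setminus B$ extends to a finite étale cover away from the branch divisor, tame along it since $|{\mathcal G}|\notin{\mathfrak p}$.

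Irreducibility of $F$ in $\overline K[T,Y]$ is equivalent to transitivity of ${\mathcal G}$ on the $n$ sheets of $X\to\mathbb{A}^1_K$. The key tool is then Grothendieck's specialization theorem for the tame étale fundamental group: in our setting it yields a \emph{surjection}
\[
\pi_1^{\rm t}\bigl(\mathbb{A}^1_{\overline{\kappa_{\mathfrak p}}}\setminus s_{\mathfrak p}(B)\bigr)\;\twoheadrightarrow\;\pi_1^{\rm t}\bigl(\mathbb{A}^1_{\overline K}\setminus B\bigr),
\]
compatible with the specialization of covers (this is where the hypotheses of good reduction of the branch locus and tameness of the cover are both essential). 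The cover $X\to\mathbb{A}^1_K$ factors through the target tame fundamental group because $|{\mathcal G}|$ is prime to the residue characteristic, so its reduction factors through the source, and the monodromy group of the reduced cover surjects onto ${\mathcal G}$. Transitive action persists under surjection, so $s_{\mathfrak p}(F)$ defines a connected cover and is irreducible in $\overline{\kappa_{\mathfrak p}}[T,Y]$.

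The main obstacle is the specialization statement for $\pi_1^{\rm t}$ in this non-proper (affine) situation. This is precisely the content of \cite{SGA} \cite{GrMu} (revisited polynomially in \cite{DeRedSpePol}), and is what makes the condition ``$\mathfrak p$ good and $|\mathcal G|\notin\mathfrak p$'' the correct hypothesis: good-prime-ness gives a smooth model of the branch locus so that the specialization map between fundamental groups is defined and surjective, while the order hypothesis ensures the particular cover of interest lives on the prime-to-$p$/tame quotient where this surjectivity is available.
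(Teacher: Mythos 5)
The paper does not actually prove Theorem~\ref{thm:reduction_property}: it is quoted verbatim as ``theorem 2.6 of \cite{DeRedSpePol}'', and the surrounding text (\S\ref{ssec:monic_reduction}--\ref{ssec:central-result}) only \emph{recalls} the construction of ${\mathcal B}_F$ from that reference. So there is no in-paper proof to compare against; your sketch is a reconstruction of what the cited proof must do, and on that basis it is a sound outline. The \emph{Good Behaviour} part is correct: $\Delta_{F,0}\notin{\mathfrak p}$ and $\operatorname{disc}(\Delta_F^{\rm red})\notin{\mathfrak p}$ follow separately because ${\mathfrak p}$ is prime, monicity of $F$ in $Y$ gives $s_{\mathfrak p}(\Delta_F)=\Delta_{s_{\mathfrak p}(F)}$ with unchanged $T$-degree, and the identification $s_{\mathfrak p}(\Delta_F^{\rm red})=\Delta_{s_{\mathfrak p}(F)}^{\rm red}$ then rests exactly on the roots $\tau_i$ (integral over $A_{\mathfrak p}$ since $\Delta_{F,0}$ is a unit there and $A$ is integrally closed) reducing to $\rho$ distinct elements of $\overline{\kappa_{\mathfrak p}}$.

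In the \emph{Good Reduction} part the ingredients are the right ones (étale branch divisor over the Henselian base by Good Behaviour, tameness from $|{\mathcal G}|\notin{\mathfrak p}$, Grothendieck's tame specialization, transitivity of monodromy), but the specialization map is written in the wrong direction. Grothendieck's tame specialization (SGA~1, Exp.~XIII, 2.12) is
\[
\pi_1^{\rm t}\bigl(\mathbb{A}^1_{\overline K}\setminus B\bigr)\;\longrightarrow\;\pi_1^{\rm t}\bigl(\mathbb{A}^1_{\overline{\kappa_{\mathfrak p}}}\setminus s_{\mathfrak p}(B)\bigr),
\]
from the geometric generic fibre to the geometric special fibre, and it is an \emph{isomorphism} in the tame case (surjective already in the proper non-tame case). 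Because it is an isomorphism the reversal is not fatal, but as you have phrased it --- a surjection from the special to the generic side, precomposed with the generic monodromy --- the logic does not go through as stated: a surjection onto $\pi_1^{\rm t}(\text{generic})$ does not by itself identify the resulting action with the one coming from the actual polynomial $s_{\mathfrak p}(F)$. The clean argument is: the tame $\mathcal G$-cover of the generic fibre spreads out to an étale cover over $\operatorname{Spec}(A_{\mathfrak p}^{\rm sh})$ minus the branch divisor, its special fibre is the cover cut out by $s_{\mathfrak p}(F)=0$ (this identification is a genuine step, using monicity and Good Behaviour, and you elide it), and the corresponding action of $\pi_1^{\rm t}(\text{special})$ pulls back under the specialization map to the generic action; surjectivity of the specialization map then forces transitivity on the special side. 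I would also flag that $A=k[\underline t]$ is not a Dedekind domain for $s\ge 2$, which is precisely why the paper only applies this theorem one parameter at a time in \S\ref{ssec:successive_specializations} to arrive at Corollary~\ref{cor:specializations}; your proof should be understood as taking place over a Dedekind (indeed discrete valuation) base.
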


Condition ${\mathcal B}_{s_{\mathfrak p}(F)} = s_{\mathfrak p}({\mathcal B}_F)\not=0$ 
 rephrases as saying that no distinct roots $\tau_i$ and $\tau_j$ of 
$\Delta_F$ meet modulo ${\mathfrak p}$ 
and none of the roots $\tau_i$ meets $\infty$ modulo ${\mathfrak p}$.

%If $A=k[\underline t]$ with $k$ 
%algebraically closed, the bad prime divisor ${\mathcal B}_F$ is an element of 
%$k[\underline t]$ and the bad primes are the $s$-tuples $\underline t^\ast
%= (t_1^\ast,\ldots,t_s^\ast)$ such that ${\mathcal B}_F(\underline t^\ast)=0$. 

  \subsubsection{Specializations in families of polynomials} \label{ssec:successive_specializations} 
Take $A=k[\underline t]$ and consider a polynomial $F\in k[\underline t][T,Y]$ as in \S \ref{ssec:bad_prime-divisor}.
%in  be a polynomial, monic, in $Y$ and ir\-re\-du\-cible in the ring $\overline{k(\underline t)}[T,Y]$. 
The bad prime divisor ${\mathcal B}_F$ is an element of $k[\underline t]$ and the bad primes are the 
$s$-tuples $\underline t^\ast = (t_1^\ast,\ldots,t_s^\ast)$ such that ${\mathcal B}_F(\underline t^\ast)=0$. 
%Its bad prime divisor ${\mathcal B}_F$ is in $k[\underline t]$. 

Theorem \ref{thm:reduction_property} cannot be applied directly if $s\geq 2$ as $A=k[\underline t]$ 
is not a Dedekind domain but can be applied to $F$ viewed in $k(t_1,\ldots,t_{s-1})[t_s][T,Y]$. 
 It is readily checked that the bad prime divisor relative to $k(t_1,\ldots,t_{s-1})[t_s]$ is the same as 
 relative to the smaller ring $k[t_1,\ldots,t_s]$. Hence it 
 is  the polynomial ${\mathcal B}_F$ in $k[t_1,\ldots,t_s]$ introduced above. 
 
From the assumptions, $k$ is of characteristic $0$ or $p>(2n-1)m \geq n$. Therefore, as $|{\mathcal G}|$
divides $n!$, $p$ cannot divide $|{\mathcal G}|$ and $|{\mathcal G}|$ is in no prime ideal 
${\mathfrak p}$ of $k[\underline t]$. Let $t_s^\ast\in k$ such that 
${\mathcal B}_F(t_1,\ldots,t_{s-1},t_s^\ast) \not= 0$. From theorem \ref{thm:reduction_property}, 
$F(t_1,\ldots,t_{s-1},t_s^\ast,T,Y)$ is irreducible in 
$\overline{k(t_1,\ldots,t_{s-1})}[T,Y]$ and its bad prime divisor is 
${\mathcal B}_F(t_1,\ldots,t_{s-1},t_s^\ast) \in k[t_1,\ldots,t_{s-1}]$. 
Theorem \ref{thm:reduction_property} can then be applied to $F(t_1,\ldots,t_{s-1},t_s^\ast,T,Y)$ 
to specialize $t_{s-1}$. An inductive argument finally leads to this conclusion: 

\begin{corollary} \label{cor:specializations} If $(t_1^\ast,\ldots,t_s^\ast)\in k^s$ 
satisfies ${\mathcal B}_F(t_1^\ast,\ldots ,t_s^\ast) \not= 0$, then the polynomial 
$F(t_1^\ast,\ldots ,t_s^\ast,T,Y)\in k[T,Y]$ is irreducible in $k[T,Y]$.
\end{corollary}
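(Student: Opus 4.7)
The plan is to argue by induction on the number $s$ of parameters, specializing them one at a time and reapplying Theorem \ref{thm:reduction_property} at each step. The setup of \S\ref{ssec:successive_specializations} already fixes the right framework: even though $k[\underline t]$ fails to be Dedekind when $s\geq 2$, the ring $k(t_1,\ldots,t_{s-1})[t_s]$ is a PID, so Theorem \ref{thm:reduction_property} does apply to $F$ viewed there.

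First I would check the base case $s=1$, which is just a direct application of Theorem \ref{thm:reduction_property}: the characteristic assumption $p>(2n-1)m$ forces $p\nmid n!$, hence $p\nmid |{\mathcal G}|$, so the only condition to verify at a prime ${\mathfrak p}=(t_1-t_1^\ast)$ is that ${\mathfrak p}$ be good, i.e.\ that ${\mathcal B}_F(t_1^\ast)\not=0$; the Good Reduction conclusion then gives irreducibility of $F(t_1^\ast,T,Y)$ in $\overline{k}[T,Y]=k[T,Y]$ (recall $k=\overline{k}$).

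For the inductive step, I would view $F$ as an element of $k(t_1,\ldots,t_{s-1})[t_s][T,Y]$. The key observation (already noted in \S\ref{ssec:successive_specializations}) is that the bad prime divisor computed in this larger ring coincides with the polynomial ${\mathcal B}_F\in k[t_1,\ldots,t_s]$ defined in \S\ref{ssec:bad_prime-divisor}: both the discriminant $\Delta_F$ and its reduced form $\Delta_F^{\rm red}$ are intrinsic to the factorization of $F$ over $\overline{k(\underline t)}$, and the construction of ${\mathcal B}_F$ uses only rational operations on coefficients. Thus the hypothesis ${\mathcal B}_F(t_1^\ast,\ldots,t_s^\ast)\neq 0$ is precisely the statement that the prime $(t_s-t_s^\ast)$ is good for $F$ over $k(t_1,\ldots,t_{s-1})[t_s]$ after further specialization, and also at the current step.

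The induction then proceeds as follows. Applying Theorem \ref{thm:reduction_property} with $A=k(t_1,\ldots,t_{s-1})[t_s]$ and ${\mathfrak p}=(t_s-t_s^\ast)$, the Good Reduction conclusion yields that $F_1:=F(t_1,\ldots,t_{s-1},t_s^\ast,T,Y)$ is irreducible in $\overline{k(t_1,\ldots,t_{s-1})}[T,Y]$, while the Good Behaviour conclusion gives ${\mathcal B}_{F_1}={\mathcal B}_F(t_1,\ldots,t_{s-1},t_s^\ast)\in k[t_1,\ldots,t_{s-1}]$, which is non-zero at $(t_1^\ast,\ldots,t_{s-1}^\ast)$ by hypothesis. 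The polynomial $F_1$ therefore satisfies all the standing hypotheses (degrees $n,m$ unchanged or smaller, same characteristic condition, same monicity in $Y$) and has a smaller number $s-1$ of parameters. Applying the induction hypothesis to $F_1$ at $(t_1^\ast,\ldots,t_{s-1}^\ast)$ yields the desired conclusion.

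The only real subtlety is the Good Behaviour conclusion of Theorem \ref{thm:reduction_property}: without the fact that the bad prime divisor specializes to the bad prime divisor of the specialized polynomial, the induction would not chain, because the condition ${\mathcal B}_F(t_1^\ast,\ldots,t_s^\ast)\neq 0$ would not immediately translate into a hypothesis usable at the next step. This is exactly the content of \cite[theorem 2.6]{DeRedSpePol} invoked above, and it is what makes the inductive argument work cleanly.
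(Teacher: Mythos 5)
Your proof is correct and follows essentially the same route as the paper: both arguments specialize the parameters one at a time, working at each step over the Dedekind domain $k(t_1,\ldots,t_{i-1})[t_i]$, and chain the induction by invoking both the Good Reduction and Good Behaviour conclusions of Theorem \ref{thm:reduction_property}. Your write-up merely makes explicit two points the paper leaves implicit — the base case $s=1$ (where the residue field is $k=\overline{k}$ itself) and the fact that Good Behaviour is precisely what turns the nonvanishing of ${\mathcal B}_F$ at $\underline t^\ast$ into a usable hypothesis at the next stage of the induction.
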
 

%The case $A=\Zz$ is another situation of interest. In this case, ${\mathcal B}_F\in \Zz$ 
%and the bad primes are the prime numbers that divide ${\mathcal B}_F$.

\subsubsection{Reduction modulo $p$} The unifying context ``$F\in A[T,Y]$ with $A$ 
is a Dedekind domain'' also 
allows the special case $F\in \Zz[T,Y]$ and the prime ${\mathfrak p}$ is a 
prime number $p$. In this situation 
the bad prime divisor ${\mathcal B}_F$ is a non-zero integer, 
the bad primes are the prime numbers  dividing ${\mathcal B}_F$ and 
theorem \ref{thm:reduction_property} yields this effective version of Ostrowski's theorem:

\begin{corollary} If $p$ is a prime number not dividing ${\mathcal B}_F$ nor 
$|{\mathcal G}|$, then the reduced polynomial $\overline F$ is irreducible 
in $\overline{\Ff_p}[T,Y]$.% and of monodromy group ${\mathcal G}$.
\end{corollary}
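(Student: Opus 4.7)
The proof is essentially a direct specialization of Theorem \ref{thm:reduction_property} to the arithmetic setting $A=\Zz$, $K=\Qq$, with the prime ideal $\mathfrak{p}=p\Zz$. The plan is to verify that all hypotheses of that theorem are met in this case and then read off the Good Reduction conclusion.

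First, I would observe that $\Zz$ is the paradigmatic Dedekind domain, with residue field $\kappa_{\mathfrak p} = \Zz/p\Zz = \Ff_p$ and reduction map $s_{\mathfrak p}$ equal to the usual mod $p$ reduction, so $s_{\mathfrak p}(F)=\overline F$. As in \S\ref{ssec:monic_reduction}, up to replacing $F$ by its monic-in-$Y$ avatar $Q(T,Y)=F_0^{n-1}F(T,Y/F_0)$, we may freely assume $F$ is monic in $Y$; this preliminary reduction does not affect the divisibility conditions on $p$ since the bad prime divisor ${\mathcal B}_F$ was defined precisely to accommodate this setup, and the characteristic hypothesis is vacuous over $\Zz$ (char $0$).

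Next, the two numerical hypotheses in the corollary translate directly into the hypotheses on $\mathfrak{p}$ in Theorem \ref{thm:reduction_property}. The condition ``$p$ does not divide ${\mathcal B}_F$'' means exactly that ${\mathcal B}_F\notin p\Zz$, i.e.\ $\mathfrak{p}=p\Zz$ is a \emph{good} prime of $F$ in the sense of the definition in \S\ref{ssec:bad_prime-divisor}. Similarly ``$p$ does not divide $|\mathcal G|$'' says $|\mathcal G|\notin \mathfrak p$.

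With these two hypotheses in hand, Theorem \ref{thm:reduction_property} applies and yields both conclusions; we only need the Good Reduction part, which asserts that $s_{\mathfrak p}(F)$ is irreducible in $\overline{\kappa_{\mathfrak p}}[T,Y]=\overline{\Ff_p}[T,Y]$. Since $s_{\mathfrak p}(F)=\overline F$, this is exactly the desired statement. There is no substantive obstacle here: the entire work is hidden in the construction of ${\mathcal B}_F$ and in Theorem \ref{thm:reduction_property} itself; the corollary is simply the translation of that machinery to the classical Ostrowski setting $A=\Zz$, and the effectivity comes from the fact that ${\mathcal B}_F$ is an explicitly computable non-zero integer.
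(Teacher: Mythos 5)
Your proposal is correct and matches the paper's intent: the corollary is indeed nothing more than Theorem~\ref{thm:reduction_property} specialized to the Dedekind domain $A=\Zz$ with ${\mathfrak p}=p\Zz$, $\kappa_{\mathfrak p}=\Ff_p$, and $s_{\mathfrak p}$ the mod-$p$ reduction, reading off the Good Reduction conclusion. The paper states this in one sentence rather than as a formal proof, and your unpacking of it is accurate (note only that the monic-in-$Y$ hypothesis is already in force from \S\ref{ssec:bad_prime-divisor} throughout \S\ref{ssec:central-result}, so the reduction step you mention is part of the standing setup rather than something to be re-justified here).
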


\subsubsection{Explicitness of ${\mathcal B}_F$} \label{ssec:typical}
In the two typical situations $A=k[\underline t]$ and $A=\Zz$, one can explicitly bound the height and 
the partial degrees of ${\mathcal B}_F$. However as already alluded to in \S \ref{ssec:grothendieck-our-contribution}, the bounds are big and do not improve on previously known ones; we refer to \cite[\S 4.2]{DeRedSpePol} for the estimate
$\deg_{t_i}({\mathcal B}_{F}) \leq  16 d^5 \deg_{t_i}({F})$ given in 
\S \ref{ssec:grothendieck-our-contribution}. 

On the other hand, one can compute the exact value of ${\mathcal B}_F$ for specific polynomials {\it via} a simple computer program. For example, for $F(T,Y) = Y^3 - 6T^2 + TY - 2$ (in case $A=\Zz$), one obtains ${\mathcal B}_{F}=2^{16} \cdot 3^{19} \cdot 431 \cdot 433$. It can be checked that $F$ is irreducible modulo $5$, as expected, that $F$ is reducible modulo the prime $2$, which divides ${\mathcal B}_{F}$, and is irreducible modulo $3$ although $3$ 
divides ${\mathcal B}_{F}$; the bad prime divisor is not optimal. Similarly for $F(T,Y) = T^2 Y + 2TY^2 + Y^3 + 3T + 3Y + t$ (in case $A=\Qq[t]$), one obtains ${\mathcal B}_{F}= t^2 \cdot (t - 1)^3 \cdot (t + 1)^3$. It can be checked that $F(2,T,Y)$ is irreducible, $F(0,T,Y)$ is reducible and $F(1,T,Y)$ is irreducible.

%----------------------------------------
\subsection{Conjoining \S \ref{ssec:reduction_l=2} and \S \ref{sec:bad-prime-divisor}}  
\label{ssec:conjoin} 
In our original situation, we have a polynomial 
$F\in k[\underline t,\underline x]$ assumed to be irreducible in 
$\overline{k(\underline t)}[\underline x]$. Assume further that $k$ is of characteristic 
$0$ or $p>2 \deg(F) ^3$. 

%$p>\max(\deg_{x_j}(F)\hskip 1pt ! \hskip 3pt | \hskip 2pt  j=1,\ldots,\ell)$.

\begin{corollary}\label{cor:s-indetreminates} There is a non-zero polynomial $\widetilde{\mathcal B}_F(\underline t,\underline x) \in k[\underline t,\underline x]$, explicitly constructed in the proof, with the following property. For every $\underline t^\ast\in k^s$ such that 
$\widetilde{\mathcal B}_F(\underline t^\ast, \underline x)\not= 0$ 
{\rm (in $k[\underline x]$)}, the polynomial $F(\underline t^\ast,\underline x)$ 
is irreducible in $k[\underline x]$.
\end{corollary}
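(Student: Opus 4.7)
The plan is to combine Theorem \ref{cor:reduction} with the bad prime divisor machinery of \S \ref{sec:bad-prime-divisor} applied to the reduced polynomial, viewed over the larger base ring $k[\underline t, x_1, \ldots, x_{\ell-2}]$ so that $x_1, \ldots, x_{\ell-2}$ are treated as additional ``parameters'' carried along with $\underline t$. First, apply Theorem \ref{cor:reduction} with $\kappa = k$ (infinite since $k$ is algebraically closed) to produce a matrix $B \in {\rm GL}_\ell(k)$ such that $G(\underline t, \underline x) := F(\underline t, B \cdot \underline x)$ is irreducible in $\overline{k(\underline t, x_1, \ldots, x_{\ell-2})}[x_{\ell-1}, x_\ell]$ with $\deg_{x_{\ell-1}, x_\ell}(G) = \deg_{\underline x}(F) = d$. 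Choosing $B$ generically (an extra non-empty Zariski-open condition on its last column), I may additionally require that the coefficient $\lambda(\underline t) \in k[\underline t]$ of $x_\ell^d$ in $G$ is a non-zero polynomial; this is possible because $F$ has a non-zero degree-$d$ form in $\underline x$ that gets evaluated at the last column of $B$ to produce $\lambda$.

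Next, view $G$ as a polynomial in $T := x_{\ell-1}$, $Y := x_\ell$ with coefficients in the integrally closed domain $A := k[\underline t, x_1, \ldots, x_{\ell-2}]$, and apply the construction of \S \ref{ssec:bad_prime-divisor} to obtain a non-zero bad prime divisor ${\mathcal B}_G \in A$; the hypothesis $p > 2 \deg(F)^3$ covers the characteristic condition $p > (2n^2 - n) m$ (with $n,m \le d$) needed after the monic reduction of \S \ref{ssec:monic_reduction}. Define
\[
\widetilde{\mathcal B}_F(\underline t, \underline x) := \lambda(\underline t) \cdot {\mathcal B}_G(\underline t, x_1, \ldots, x_{\ell-2}) \in k[\underline t, \underline x],
\]
which is a non-zero polynomial.

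Now suppose $\widetilde{\mathcal B}_F(\underline t^\ast, \underline x) \ne 0$ in $k[\underline x]$, so that both $\lambda(\underline t^\ast) \ne 0$ and ${\mathcal B}_G(\underline t^\ast, x_1, \ldots, x_{\ell-2}) \ne 0$. Run the iterative specialization scheme of \S \ref{ssec:successive_specializations} on $G$: for $i = s, s-1, \ldots, 1$, apply Theorem \ref{thm:reduction_property} over the Dedekind ring $k(t_1, \ldots, t_{i-1}, x_1, \ldots, x_{\ell-2})[t_i]$ and the prime $(t_i - t_i^\ast)$. The ``good prime'' condition propagates through each step via the Good Behaviour clause of Theorem \ref{thm:reduction_property} (together with the observation from \S \ref{ssec:successive_specializations} that the bad prime divisor is insensitive to enlarging the base), and $|\mathcal G| \notin \mathfrak p$ because $|\mathcal G|$ divides $n!$ and $p > n$. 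The outcome is that $G(\underline t^\ast, \underline x)$ is irreducible in $\overline{k(x_1, \ldots, x_{\ell-2})}[x_{\ell-1}, x_\ell]$.

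Finally, I upgrade this to irreducibility in $k[\underline x]$ using $\lambda(\underline t^\ast) \ne 0$. Any factorization $G(\underline t^\ast, \underline x) = G_1 G_2$ in $k[\underline x]$ with both factors non-constant would, read in $\overline{k(x_1, \ldots, x_{\ell-2})}[x_{\ell-1}, x_\ell]$, force one factor (say $G_1$) to be a unit there, hence $G_1 \in k[x_1, \ldots, x_{\ell-2}]$; but then $G_1$ divides every coefficient of $G(\underline t^\ast, \underline x)$ as a polynomial in $(x_{\ell-1}, x_\ell)$, including the constant $\lambda(\underline t^\ast) \in k^\times$, so $G_1$ must itself be a non-zero constant, a contradiction. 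Thus $G(\underline t^\ast, \underline x)$, and hence $F(\underline t^\ast, \underline x) = G(\underline t^\ast, B^{-1} \cdot \underline x)$, is irreducible in $k[\underline x]$. The main obstacle I anticipate is precisely this final Gauss's-lemma-style passage: naively combining Theorem \ref{cor:reduction} and Corollary \ref{cor:specializations} only yields irreducibility in $\overline{k(x_1, \ldots, x_{\ell-2})}[x_{\ell-1}, x_\ell]$, and it is the extra factor $\lambda(\underline t)$ that rules out parasitic divisors supported in $k[x_1, \ldots, x_{\ell-2}]$.
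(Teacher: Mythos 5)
Your proof takes the same overall route as the paper (Theorem~\ref{cor:reduction} to drop to the two indeterminates $T=x_{\ell-1}$, $Y=x_\ell$ over $A=k[\underline t,x_1,\ldots,x_{\ell-2}]$, then monic reduction and the bad prime divisor, then iterated specialization of $t_s,\ldots,t_1$), but you diverge at the last step, and the divergence is substantive. The paper defines $\widetilde{\mathcal B}_F$ to be the bad prime divisor of the monicized $F(\underline t,B\cdot\underline x)$ alone, and concludes by specializing $x_1,\ldots,x_{\ell-2}$ to generic values $x^\ast$, invoking Corollary~\ref{cor:specializations} to get irreducibility in $k[T,Y]$, and then passing \emph{a fortiori} to irreducibility of $F(\underline t^\ast,B\cdot\underline x)$ in $k[\underline x]$. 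You instead do not specialize the $x_k$'s: you stop at irreducibility over $\overline{k(x_1,\ldots,x_{\ell-2})}$ and rule out a parasitic factor $G_1\in k[x_1,\ldots,x_{\ell-2}]$ by the extra factor $\lambda(\underline t)$ that you build into $\widetilde{\mathcal B}_F$. This factor is not cosmetic. The paper's \emph{a fortiori} step works cleanly for the monicized polynomial (whose leading $Y$-coefficient is $1$, so any factor lying in $k[x_1,\ldots,x_{\ell-2},T]$ would divide $1$), but transporting the conclusion from the monicized polynomial back to $G=F(\underline t,B\cdot\underline x)$ requires knowing that $G(\underline t^\ast,\underline x)$ has no non-unit content over $k[x_1,\ldots,x_{\ell-2},T]$; the bad prime divisor by itself does not visibly control the specialized leading coefficient of $G$. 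By choosing $B$ with an extra open condition on its last column so that the $Y^d$-coefficient $\lambda(\underline t)$ lies in $k[\underline t]$, and multiplying it into $\widetilde{\mathcal B}_F$, you guarantee that after specialization the leading $Y$-coefficient is $\lambda(\underline t^\ast)\in k^\times$, whence $G(\underline t^\ast,\underline x)$ is primitive and the Gauss-lemma step closes. Your version thus makes explicit a point the paper leaves implicit (or possibly overlooks).

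Two small remarks. First, the iterated specialization of Theorem~\ref{thm:reduction_property} formally applies to the monicized polynomial, not to $G$ itself; you should state that you run the scheme on $\widetilde G$ and then transfer to $G$ using $\lambda(\underline t^\ast)\in k^\times$ (this makes the monicization a benign rescaling of $Y$). Second, you should note that the two requirements on $B$ --- the conclusion of Theorem~\ref{cor:reduction} and the non-vanishing of $\lambda$ --- are both satisfied on non-empty Zariski-open subsets of ${\rm GL}_\ell(k)$ (the latter because the degree-$d$ form of $F$ is non-zero and gets evaluated at the last column of $B$), so a common $B$ exists since $k$ is infinite and ${\rm GL}_\ell$ is irreducible. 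With these two clarifications your argument is complete and, in my view, a cleaner finish than the paper's \emph{a fortiori} paragraph.
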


%Fix $i\in \{1,\ldots,h\}$. 
\begin{proof}
The number of indeterminates $x_1,\ldots,x_\ell$ being $\ell \geq 2$, we 
may assume $\deg_{x_j}(F) \geq 1$ for $j=\ell-1,\ell$. Set $T=x_{\ell-1}$, 
$Y=x_{\ell}$, $A= k[\underline t,x_1,\ldots,x_{\ell-2}]$ and $K={\rm Frac}(A)$. 
From theorem \ref{cor:reduction}, there is a matrix $B \in {\rm GL}_\ell(k)$ 
such that the polynomial 
\vskip 1mm

\centerline{$F(\underline t, B\cdot \underline x)$}
\vskip 1mm

\noindent
is in $A[T,Y]$ and is irreducible in $\overline K[T,Y]$. The assumption on the characteristic of $k$ guarantees that
the one made on the characteristic of $A$ in \S \ref{sec:bad-prime-divisor} is satisfied. Apply \S \ref{ssec:monic_reduction} 
to make $F$ monic in $Y$.  Denote then its bad prime divisor 
by $\widetilde{\mathcal B}_F$; it is a non-zero element of $k[\underline t,x_1,\ldots,x_{\ell-2}] \subset k[\underline t,\underline x]$. 
\vskip 2pt

Let $\underline t^\ast\in k^s$ such that $\widetilde{\mathcal B}_F(\underline t^\ast,\underline x)\not=0$ 
(in $k[\underline x]$). The set of $(\ell-2)$-tuples $(x_1^\ast,\ldots,x_{\ell-2}^\ast)\in k^{\ell-2}$ 
such that $\widetilde{\mathcal B}_F(\underline t^\ast,x_1^\ast,\ldots,x_{\ell-2}^\ast) = 0$ is 
a proper Zariski closed subset ${\mathcal Z} \subset k^{\ell-2}$. From corollary 
\ref{cor:specializations},
%; note that the condition on the characteristic of $k$ is 
%guaranteed by the one we have made in this subsection. 
for every 
$(x_1^\ast,\ldots,x_{\ell-2}^\ast)\in k^{\ell-2}\setminus {\mathcal Z}$, 
the polynomial obtained from $F(\underline t, B\cdot \underline x)$ by 
specializing $\underline t$ to $\underline t^\ast$ and $x_k$ to $x_k^\ast$ for 
$k=1,\ldots, \ell-2$, is irreducible in  $k[T,Y]$. {\it A fortiori}
the polynomial obtained by only 
specializing $\underline t$ to $\underline t^\ast$ is irreducible in $k[\underline x]$. This 
polynomial is $F(\underline t^\ast, B \cdot \underline x)$. The result follows as the 
matrix $B$ is invertible.
\end{proof}

Thanks to the generality of theorem  \ref{cor:reduction}, corollary \ref{cor:s-indetreminates} extends
to the situation of several polynomials $F_1,\ldots,F_h$: $\widetilde{\mathcal B}_F$ should be replaced by
the product $\widetilde{\mathcal B}_{F_1} \cdots \widetilde{\mathcal B}_{F_h}$ for some matrix $B$ working for all  
$F_1,\ldots,F_h$; the two indeterminates $x_i$ which play the role of $T$ and $Y$ may however differ.

\section{Combinatorial approach to irreducibility criteria} \label{sec:combinatoric}

The central aim of this section is to provide some generic irreducibility criteria; it elaborates 
on \S \ref{ssec:bertini-krull-gao}. \S \ref{ssec:bertini-krull} first reviews the approach based 
on the Bertini-Krull theorem. \S \ref{sec:monomial} is devoted to
applications of the more recent Gao criteria for non-summability of polyhedrons.

In this section we assume that $F$ is of the form
\vskip 1mm

\noindent
 \hskip 25mm $F(\underline t,\underline x) = P(\underline x) -t_1Q_1(\underline x) - \cdots - t_s Q_s(\underline x)$
\vskip 1mm

\noindent
that is, is a linear deformation of the polynomial $P\in k[\underline x]$
by the polynomials $Q_1,\ldots,Q_s\in k[\underline x]$.

%%%%%%%%%%%%%%%%%%%%%%%%%%%%%%%%%%%%%%%%%%%%%%%%%%%%%%%%%%%%%%%%%
\subsection{The Bertini-Krull approach} \label{ssec:bertini-krull}
The Bertini-Krull theorem is a very explicit {\it iff} criterion for a polynomial $F(\underline t,\underline x)$ 
as above to be generically irreducible. We refer to \cite[theorem 37]{schinzel-book} for the precise 
statement. We recall below two applications (\S \ref {sec:s=1} and \S \ref{sssec:BoDeNa1}).

%\subsubsection{Deformation by monomials (II)} \label{sssec:BoDeNa1}

\subsubsection{Pencil of two polynomials} \label{sec:s=1}
%%----------------------------------------
%\paragraph{The Bertini-Krull theorem}
%
%When the deformation is done by linear terms we have a new general tool 
%to determine generic irreducibilty.
%Let $k$ be an algebraically closed field of characteristic $0$ or $p>0$.  
%We recall the Bertini-Krull theorem (\cite[theorem 37]{schinzel-book}).
%
%\begin{theorem}[Bertini, Krull]
%\label{th:BK} 
%
%The following conditions are equivalent:
%\begin{enumerate}
% 
%\item $F(\tbar,\xbar)$ is reducible in $\ktbar[\xbar]$.
%
%  
%\item
%  \begin{enumerate}
%  \item \label{iit:charp} either $\charac k = p >0$ and
%    $F(\tbar,\xbar) \in k[\tbar,\xbar^p]$, where $\xbar^p =
%    (x_1^p,\ldots,x_\ell^p)$,
%  \item \label{iit:hom} or 
%    there exist $\phi,\psi \in k[\xbar]$ with
%    $\deg_{\xbar}(F) > \max(\deg(\phi), \deg(\psi))$ and there exist 
%    an integer $d>1$ and $s +1$ polynomials $h_i(u,v) \in k[u,v]$
%    homogeneous of degree $d$ such that
%$$\left\lbrace
%\begin{array}{l}
%P(\xbar) = h_0(\phi(\xbar),\psi(\xbar)) = \sum_{j=0}^{d} a_{0j}
%\phi(\xbar)^j \psi(\xbar)^{d-j}\\
%Q_1(\xbar) = h_1(\phi(\xbar),\psi(\xbar)) = \ldots \hfill \\
%\vdots \\
%Q_s(\xbar) = h_s(\phi(\xbar),\psi(\xbar)) = \sum_{j=0}^{d}
%a_{s j} \phi(\xbar)^j \psi(\xbar)^{d-j}\\
%\end{array}
%\right.$$
%That is to say  
%$$F(\tbar,\xbar) = H(\tbar,\phi(\xbar),\psi(\xbar))$$
%by setting $H(\tbar,u,v) = h_0(u,v) + \sum_{i=1}^s
%t_i h_i(u,v)$. 
%
%  \end{enumerate}
%\end{enumerate}
%\end{theorem}
%
%----------------------------------------
%\paragraph{Rational fraction} 
Assume further that $s=1$, that is:
\vskip 1,5mm

\noindent
\centerline{$F(t,\xbar) = P(\xbar) - t Q(\xbar)$}
\vskip 1mm

\noindent
with $\max(\deg(P),\deg(Q)) \geq 1$. The Bertini-Krull theorem relates the generic irreducibility of $F$ to the 
indecomposability of the rational function $P/Q$. Recall that $P/Q$ is said to be
\defi{decomposable} in $k(\xbar)$ if there exist $A,B \in k[\xbar]$, $B\not=0$ and $h,g \in k[u]$ with $g\not=0$ and 
$\max(\deg g,\deg h) \ge 2$ such that

\vskip 1mm

\noindent
\centerline{$\displaystyle \frac{P}{Q} = \frac{h}{g} \left( \frac{A}{B} \right)$.} \hskip -50mm \footnote{There is also a notion of decomposability for rational functions in one indeterminate. 
Definitions, problems, tools and techniques are however different although there is a Hilbert like 
specialization theorem proved in \cite{BoCheDe} which provides a bridge between indecomposable
polynomials in several indeterminates and those in one indeterminate.}
%\vskip 1mm

\noindent
Then we have 
\vskip 1mm

\noindent
(1) {\it $F(t,\xbar) = P(\xbar)-tQ(\xbar)$ is generically irreducible 
if and only if $P/Q$ is indecomposable.}
\vskip 1mm

\noindent
In the special ``polynomial situation'', {\it i.e.} $Q=1$, the condition ``$P$ indecomposable in $k(\underline x)$''
is equivalent to ``$P$ indecomposable in $k[\underline x]$'', {\it i.e.} $P$ does not write $P(\xbar) = h(A(\xbar))$ with $A \in k[\xbar]$ and $h \in k[u]$ with $\deg h \ge 2$. This follows from results due to Gordan and Noether in characteristic $0$ and Igusa and Schinzel in general \cite[theorems 3 and 4]{schinzel-book}.

%\textbf{Agenda :} donner une r\'ef\'erence ou la preuve d'Arnaud.]
\vskip 1mm

%in 
%terms of decomposition of polynomials.
%It is a good idea to see the pencil of polynomials $P-t Q$ as 
%a family of rational fraction $\frac{P}{Q}-t$. With this point of view, we say
%that a rational fraction $\frac{P}{Q}$ is \defi{decomposable}
%if there exists $A/B \in k(\xbar)$ and $h/g \in k(u)$ with $\max(\deg g,\deg h) \ge 2$ such that
%$$\frac{P}{Q} = \frac{h}{g} \left( \frac{A}{B} \right).$$
%

%\begin{theorem}[\cite{arnaud-israel}]
%\label{th:composite}
%Let $k$ be an algebraically closed field.
%Let $F(t,\xbar) = P-t Q \in k[t,\xbar]$. 
%Then: $F(t,\xbar) = P-t Q$ is generically irreducible 
%if and only if $P/Q$ is indecomposable.
%Moreover if $P/Q$ is indecomposable then
%$\Card \sp(P-t Q)  < d^2$ where $d = \max(\deg P,\deg Q)$.
%\end{theorem}

Many articles have been devoted to the spectrum ${\rm sp}(P-tQ)$ in the indecomposable situation. 
We briefly recall the main questions and results in the next paragraphs.
\vskip 2mm

\paragraph{\hbox{\it On the cardinality}}
The general Bertini-Noether bound (6) from \S \ref{intro} gives this inequality, when $k$ 
is of characteristic $0$ or $p>d(d-1)$:
\vskip 1,5mm

\noindent
(2) \hskip 15mm ${\rm card}({\rm sp}(P-tQ)) \leq d^2-1$ \hskip 2mm with $d = \max(\deg P,\deg Q)$.
\vskip 1,5mm

\noindent
We refer to \cite{arnaud-israel} for more details on this bound in the missing cases.

Ruppert shows further in \cite{ruppert} that the Hesse cubic pencil
\vskip 1mm
\centerline{$P(x,y) - t Q(x,y) = x^3+y^3+(1+x+y)^3 - 3t\hskip 1pt {xy(1+x+y)}$}
\vskip 1mm

\noindent
reaches the maximal possible value for $d=3$ if spectral values are 
counted with multiplicity.
That is, the following is checked: 
%Let $P(x,y) - t Q(x,y) = x^3+y^3+(1+x+y)^3 - 3t{xy(1+x+y)}$, then $d = 3$ and 
$\sp(P-t Q)= \{1,j,j^2,\infty\}$ (where $1,j,j^2$ are the cubic roots of $1$)\footnote{The value $\infty$
can be moved to a finite point by a change of coordinates.}; for each $t\point \in \sp(P-t Q)$, 
the curve $P(x,y)-t\point Q(x,y)=0$ breaks into $3$ lines; if the multiplicity 
$2=3-1$ is affected to each of the elements of $\sp(P-t\point Q)$, then $4\times 2 = 8 = d^2-1$.
Furthermore Nguyen asserts \cite{nguyen}  that if the spectrum is counted with multiplicities as above, 
the Hesse cubic pencil is the only example that reaches 
the extremal value $d^2-1$ (for any $d \ge 3$). 

% If we count a 
%component  with $3$
%irreducible components as of multiplicity $2$, then the cardinal of this spectrum 
%counted with multiplicity is $8 = d^2-1$.  

%\begin{remark}
%%Moreover if $P/Q$ is indecomposable then
%%$\Card \sp(P-t Q)  < d^2$ where $d = \max(\deg P,\deg Q)$.
%
%%The second part of this proposition can of course be seen as a consequence of the effective version 
%%of Noether's theorem (see the introduction) for a field $k$ that would respect 
%%the hypothesis of Noether's theorem.
%This bound is due to Ruppert \cite{ruppert} in characteristic $0$ 
%and Lorenzini \cite{lorenzini} in arbitrary characteristic (but for $\ell = 2$), 
%and to Vistoli \cite{vistoli} for $\ell \geq 3$ and characteristic $0$. 
%We refer to Bodin \cite{arnaud-israel} for more details.  
%\end{remark}

%
%
%\begin{example}
%The following Hesse cubic pencil, \cite{ruppert}, reaches the maximal possible
%value for $d=3$, if we count the spectrum with multiplicity.
%Let $P(x,y) - t Q(x,y) 
%= x^3+y^3+(1+x+y)^3 - 3t{xy(1+x+y)}$, then $d = 3$ 
%and $\sp(P-t Q)= \{1,j,j^2,\infty\}$ 
%(where $\{1,j,j^2\}$ are the third roots of unity). (We can move the value $\infty$
%to a finite one by a change of coordinates.)
%For each $t\point \in \sp(P-t\point Q)$, $(P-t\point Q=0)$ is 
%the union of three lines. If we count a component with $3$
%irreducible components as of multiplicity $2$, then the cardinal of this spectrum 
%counted with multiplicity is $8 = d^2-1$.  
%\end{example}
%
%

Another interesting statement from \cite{nguyen} is that when $k=\Cc$, if $P(x,y)=0$ and $Q(x,y)=0$ 
are smooth plane curves, then 
\vskip 1mm

\centerline{${\rm card}({\rm sp}(P-t Q)) \le 3d-3$}
 
\vskip 1mm

\noindent
Furthermore, if $F=x(y^{d-1}-1)+\lambda(x^{d-1}-1)+\mu(x^{d-1}-y^{d-1})$ (with $d\ge3$) is viewed as a 
polynomial in $x,y$, parametrized by $(\lambda,\mu)$ in the plane $k^2$, and $P-tQ$ is obtained by 
restricting $(\lambda,\mu)$ in some $t$-line, then, generically, the pencil $P-tQ$ realizes the bound $3d-3$. 
\vskip 0,5mm

A major result about this issue remains {\it Stein's theorem} for polynomials:
\vskip 1mm

\noindent
(3) {\it If $P\in k[\underline x]$ is indecomposable, then ${\rm card}(\sp(P-t)) < \deg(P)$}.
\vskip 1mm

\noindent
This was first established by Stein \cite{stein} in two variables and in characteristic $0$,
then extended to all characteristics by Lorenzini \cite{lorenzini} and finally generalized
to more variables by Najib \cite{salah_lorenzini}.

%%----------------------------------------
%\paragraph{Stein's theorem}
%
%In the special case of the deformation of a polynomial $P$ by a constant, that is to say,
%$$F(t,\xbar) = P(\xbar)-t \in k[t,\xbar],$$
%we have a very nice characterization of generic irreducibility 
%and an optimal bound for the spectrum
%much more lower than the $d^2$ bound.
%
%
%A polynomial $P(\xbar)$ is \defi{decomposable} if it can be written:
%$$P(\xbar) = h(Q(\xbar)) \quad \text{ with } Q \in k[\xbar], h \in k[u], 
%\deg h \ge2.$$

%\begin{theorem}[Stein \cite{stein}]
%Let $k$ be an algebraic closed field.
%Let $P \in k[\xbar]$ be a polynomial of degree $d$.
%Then: $F(t,\xbar) = P-t$ is generically irreducible if and only 
%if $P$ is indecomposable.
%Moreover if $P$ is indecomposable then:
%  $$\Card \sp(P-t) < d.$$
%\end{theorem}
\vskip 2mm

\paragraph{\hbox{\it On the spectrum itself}} It can be shown that generically a polynomial 
$P(\underline x)$ is indecomposable and for $\deg(P)>2$ or $\ell>2$, the spectrum $\sp(P-t)$ is empty 
\cite[proposition 2.2]{BoDeNa2}. The question arises then as to whether other finite sets
occur as spectra (within Stein's limitations). Najib \cite{najib1} answers positively to this question. He 
shows that 
\vskip 1mm

\noindent
(4) {\it for any finite subset ${\mathcal S}\subset k$, there exists an indecomposable
polynomial $P\in k[\underline x]$ such that ${\rm sp}(P(\underline x) - t) = {\mathcal S}$.}
\vskip 1mm

\noindent
He can further fix in advance all but one of the irreducible factors of the polynomials 
$P(\underline x) - t^\ast$ with $t^\ast \in {\mathcal S}$ and arrange for Stein's inequality
to be an equality for $P$. For example, for every degree $d\geq 2$ and given $d-1$ points 
$t_1\point,\ldots,t_{d-1}\point \in k$,
%$\rho_i = 1$ for $i=1,\ldots,d-1$, and $g(x,y) = x$ and $\alpha_{i,1} = t_i\point$ ($i=1,\ldots,d-1$), 
he can construct an indecomposable polynomial $P(x,y)$ of degree $d$ such that 
$P(x,y)-t_i\point$ is divisible by $x-t_i\point$, $i=1,\ldots,d-1$. Due to Stein's inequality, 
this already implies that ${\mathcal S}$ exactly equals $\{t_1\point,\ldots,t_{d-1}\point\}$.
Such a polynomial $P(x,y)$ can be made explicit: take
\vskip 1mm

\centerline{$P(x,y) = y(x-t_1\point)(x-t_2\point)\ldots(x-t_{d-1}\point)+x$.}
\vskip 1mm

\noindent
Furthermore some converse is stated in  \cite{nguyen}: a degree $d$ polynomial in two variables 
with a spectrum of cardinality $d-1$ is, up to some change of variables, the polynomial $P$ above.

We end this discussion with open questions.
\vskip 1mm

\noindent
{\bf Problem.} {\it Given an integer $d\geq 1$ and a degree $d$ polynomial $Q\in k[\underline x]$,
\vskip 0,5mm

\noindent
{\rm (a)} find a polynomial $P\in k[\underline x]$ of degree $\leq d$ such that $P/Q$ 
is indecomposable and for which the Bertini-Noether inequality {\rm (2)} from \S \ref{sec:s=1} is an equality.
\vskip 0,5mm

\noindent
{\rm (b)} given a finite subset ${\mathcal S}\subset k$ of cardinality $\leq d^2-1$, find a polynomial 
$P\in k[\underline x]$ such that $\deg(P) \leq d$, $P/Q$ indecomposable and
${\rm sp}(P(\underline x) - tQ(\underline x)) = {\mathcal S}$.}
\vskip 1mm

For (b), the method from \cite{najib1} generalizes to construct a polynomial 
$P$ such that $\deg(P) \leq d$, $P/Q$ indecomposable and ${\rm sp}(P(\underline x) - tQ(\underline x))$ contains any prescribed subset ${\mathcal S}$ of $d-1$ elements. However
the Bertini-Noether bound (2) for rational functions is not sharp enough (as is the Stein bound (3)
for polynomials)
to conclude that the containment is an equality.

\subsubsection{Deformation by monomials} \label{sssec:BoDeNa1}
\label{sec:defmono}
Consider the general case 
\vskip 1mm

\centerline{$F(\tbar,\xbar) = P(\xbar) - t_1 Q_1(\xbar)-\cdots- t_s Q_s(\xbar)$}
\vskip 1mm

\noindent
of a deformation by $s$ polynomials but assume that $Q_1,\ldots,Q_s$ are monomials.
\cite{BoDeNa1} shows how to handle this situation with the Bertini-Krull theorem. The following statements are two selected generic irreducibility criteria from \cite{BoDeNa1}, which can be compared 
to the results of next subsection 
given by the more combinatorial  approach.

%where (nearly) no hypothesis is made on $P$ but the $Q_i$ are monomials.
%The Bertini-Krull theorem, combined with the fact that the $Q_i$ are monomials 
%has the following implications.

\begin{theorem}
\label{thm:typical}
Let $F(t,\xbar) = P(\xbar) - t Q(\xbar)$ with
$P\in k[\underline x]$ of degree $d\geq 1$.
Assume that 
 \vskip 0,5mm

\noindent
{\rm (a)} $Q$ is a monomial of degree $\le d$ and is relatively prime to $P$,
 \vskip 0,5mm

\noindent
{\rm (b)}  $\Gamma(P) \cup \Gamma(Q)$ is not contained in a line,
 \vskip 0,5mm

\noindent
{\rm (c)} $Q$ is not a pure power {\rm (}if $Q(\xbar) = 
a  x_1^{k_1}\cdots x_\ell^{k_\ell}$ then $\gcd(k_1,\ldots,k_\ell)=1${\rm )}.
 \vskip 0,5mm

\noindent
Then $F(t,\xbar)$ is generically irreducible.
\end{theorem}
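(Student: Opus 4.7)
By the Bertini--Krull equivalence (1) of \S\ref{sec:s=1}, it suffices to show that $P/Q$ is indecomposable in $k(\xbar)$. Arguing by contradiction, suppose $P/Q = h(A/B)/g(A/B)$ with $h,g \in k[u]$ coprime of degrees $a$ and $b$ satisfying $\max(a,b)\ge 2$, and $A,B \in k[\xbar]$ coprime with $B\ne 0$. Writing $h(u)=\alpha\prod_i(u-\beta_i)^{a_i}$ and $g(u)=\gamma\prod_j(u-\delta_j)^{c_j}$, the coprimality of $h,g$ and of $A,B$, together with $\gcd(P,Q)=1$, yields, up to a nonzero scalar,
\[
P = B^{\max(0,b-a)}\prod_i (A-\beta_iB)^{a_i},\qquad Q = B^{\max(0,a-b)}\prod_j (A-\delta_jB)^{c_j}.
\]

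The next step exploits that $Q$ is a monomial. The factors appearing on the right above are pairwise coprime, so each is a scalar multiple of a single monomial $\lambda\,\xbar^{\underline{m}}$, and distinct such monomials have pairwise disjoint supports. Let $r$ denote the number of distinct roots of $g$. Subtracting two identities $A-\delta_jB=\lambda_j\xbar^{\underline{m}_j}$ exhibits $B$ as a binomial $(\delta_j-\delta_{j'})^{-1}(\lambda_j\xbar^{\underline{m}_j}-\lambda_{j'}\xbar^{\underline{m}_{j'}})$; combining with a third such identity forces a proportionality between two of the $\xbar^{\underline{m}_\bullet}$ which coprimality excludes, hence $r\le 2$. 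Moreover, when $a>b$ the factor $B$ itself appears in $Q$, so $B$ must be a genuine monomial, which is incompatible with being a true binomial; hence $r\le 1$ in this case.

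I then enumerate the remaining possibilities. If $r=0$ (so $g$ is constant, forcing $a>b$ and $a\ge 2$), then $Q=\mathrm{const}\cdot B^a$; since a constant $B$ would force both $P$ and $Q$ constant, contradicting $\deg P\ge 1$, $B$ is a non-constant monomial and $Q$ is a pure power, violating (c). If $r=1$ with $b\ge a$, then $b\ge 2$ and $Q=\mathrm{const}\cdot\xbar^{b\underline{m}_1}$ is again a pure power, violating (c). In the remaining cases ($r=2$ with $b\ge a$, or $r=1$ with $a>b$), both $A$ and $B$ lie in the two-dimensional $k$-span of two monomials $\xbar^{\underline{m}}$ and $\xbar^{\underline{m}'}$ with disjoint supports. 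The substitution $y_1=\xbar^{\underline{m}}$, $y_2=\xbar^{\underline{m}'}$ then turns $P$ into a homogeneous polynomial of degree $D:=\max(a,b)$ in $(y_1,y_2)$; pulling back, $\Gamma(P)$ is contained in the affine segment of $\Rr^\ell$ joining $D\underline{m}$ to $D\underline{m}'$. A direct computation shows that $\Gamma(Q)$ is the single point $\alpha\underline{m}+\beta\underline{m}'$ with $\alpha+\beta=D$, which lies on the same affine line, contradicting (b).

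The main obstacle I anticipate is the combinatorial bookkeeping in the last step: to conclude that $\Gamma(P)\cup\Gamma(Q)$ genuinely lies on a single line of $\Rr^\ell$, one has to verify that the substitution $(y_1,y_2)\mapsto(\xbar^{\underline{m}},\xbar^{\underline{m}'})$ is compatible with the Newton polytope structure, which relies on the linear independence of $\underline{m}$ and $\underline{m}'$ guaranteed by the disjointness of their supports (itself a consequence of $\gcd(A,B)=1$). The hypothesis $\deg P\ge 1$ is used throughout to exclude degenerate configurations where $A/B$ would be constant.
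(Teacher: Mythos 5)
The paper does not actually prove Theorem~\ref{thm:typical}; it is cited from \cite{BoDeNa1} with the remark that the Bertini--Krull theorem is the main tool. Your proof goes exactly through the Bertini--Krull equivalence (1) of \S\ref{sec:s=1}, which is indeed the route that paper takes, so there is no genuine divergence of method to report; I therefore evaluate your argument on its own merits. The structure is sound: from $P/Q = (h/g)(A/B)$ with $h,g$ and $A,B$ coprime and $\gcd(P,Q)=1$ you correctly derive, up to scalars, $P = B^{\max(0,b-a)}\prod_i(A-\beta_iB)^{a_i}$ and $Q = B^{\max(0,a-b)}\prod_j(A-\delta_jB)^{c_j}$; since these factors divide the monomial $Q$ each must itself be a monomial, pairwise coprimality gives disjoint variable-supports, the binomial-for-$B$ trick bounds $r\le 2$ (and $r\le 1$ when $a>b$), and the remaining cases either make $Q$ a pure power or a constant (contradicting (c)) or place $\supp(P)\cup\supp(Q)$ on the segment $[D\underline m', D\underline m]$ (contradicting (b)). This is correct.

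Two small slips, neither fatal. First, in the $r=0$ case you assert that a constant $B$ ``would force both $P$ and $Q$ constant''; in fact it only forces $Q=\mathrm{const}\cdot g(A/B)^{-1}\cdot\dots$, i.e.\ $Q$ constant (since $g$ is constant), while $P=h(A)$ need not be -- but a constant $Q$ already fails (c) under the $\gcd$-of-exponents convention, so the conclusion stands. Second, the claim that disjoint supports of $\underline m,\underline m'$ give linear independence fails precisely when one of them is the zero vector (a constant factor); in that degenerate sub-case $A,B$ lie in the span of $1$ and one monomial $\xbar^{\underline m'}$, and $\Gamma(P)\cup\Gamma(Q)$ sits on the line through $0$ and $\underline m'$, so the contradiction with (b) is still obtained -- but you should say so rather than lean on a linear-independence statement that is false there. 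You might also note, as a bonus, that your argument never uses the hypothesis $\deg Q\le d$ from (a); only the coprimality of $P$ and $Q$ is needed for this proof.
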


For example if $P\notin k[x_1]$ and is not divisible by $x_1$,
then $P(\underline x)-t x_1$ is generically irreducible.

%
%Instead of deforming $P$ by one monomial, we can look at deformations
%of $P$ with $s\ge2$ monomials:
%$$F(\tbar,\xbar) = P(\xbar) - t_1 Q_1(\xbar)-\cdots- t_s Q_s(\xbar).$$

\begin{theorem}
\label{thm:typical2}
Let $P \in k[\xbar]$ be a polynomial of degree $d\geq 1$ and $Q_1,\ldots,Q_s$ 
be monomials of degree
  $\le d$. Assume further that
 \vskip 0,5mm

\noindent 
{\rm (a)} $s\ge 2$ and  $P,Q_1,\ldots,Q_s$ are relatively prime,
 \vskip 0,5mm

\noindent
{\rm (b)} $\Gamma(P)\cup\Gamma(Q_1)\cup\ldots\cup\Gamma(Q_s)$ is not contained in a line,
 \vskip 0,5mm

\noindent
{\rm (c)} if $\charac(k)=p>0$, at least one of 
  $P,Q_1,\ldots,Q_s$ is not a $p$-th power.
 \vskip 0,5mm

\noindent
Then $P(\underline x)-t_1 Q_1(\underline x)-\cdots - t_s Q_s(\underline x)$ is generically irreducible.
\end{theorem}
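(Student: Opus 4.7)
The plan is to apply the Bertini-Krull theorem (\cite[Theorem 37]{schinzel-book}) to $F$ viewed as a $k$-linear family $\sum_{i=0}^s u_i f_i$ with parameters $u_0=-1$, $u_i=t_i$ ($i\geq 1$) and coefficients $f_0=-P$, $f_i=Q_i$. Bertini-Krull characterizes generic reducibility of such a family by three possible obstructions: (i) the $f_i$ share a non-constant common factor in $k[\xbar]$; (ii) there exist polynomials $\psi_1,\psi_2\in k[\xbar]$ and binary forms $\phi_0,\ldots,\phi_s\in k[u,v]$ of a common degree $r\geq 2$ such that $P=\phi_0(\psi_1,\psi_2)$ and $Q_i=\phi_i(\psi_1,\psi_2)$ for each $i$; (iii) in characteristic $p>0$, every $f_i$ is a $p$-th power in $k[\xbar]$. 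Hypothesis (a) rules out (i) and hypothesis (c) rules out (iii), so the heart of the proof is to eliminate (ii), where both the monomial nature of the $Q_i$'s and the non-collinearity condition (b) enter.

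Suppose (ii) holds. Over $\kbar$ each $\phi_i$ splits as a product of linear forms, giving $Q_i=c_i\prod_{j=1}^r(\lambda_j^{(i)}\psi_1+\mu_j^{(i)}\psi_2)$. Since $Q_i$ is a single monomial, each factor $\lambda_j^{(i)}\psi_1+\mu_j^{(i)}\psi_2$ must itself be a scalar multiple of a monomial in $k[\xbar]$. Using $s\geq 2$ together with (b), I would argue that the monomial combinations of $\psi_1,\psi_2$ so obtained span the full two-dimensional space $k\psi_1+k\psi_2$. This space then admits a basis $(m_1,m_2)$ consisting of monomials, and after a $\mathrm{GL}_2(k)$ change of coordinates I may assume $\psi_1=\xbar^{\alpha}$ and $\psi_2=\xbar^{\beta}$ outright.

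Once $\psi_1,\psi_2$ are monomials, every term of $P=\sum_{a+b=r}c_{ab}\,\psi_1^a\psi_2^b$ has exponent $a\alpha+b\beta$ lying on the affine segment from $r\alpha$ to $r\beta$, and each $Q_i$ is likewise $\xbar^{a_i\alpha+b_i\beta}$ with $a_i+b_i=r$. Hence $\Gamma(P)\cup\Gamma(Q_1)\cup\cdots\cup\Gamma(Q_s)$ is contained in one affine line, contradicting (b) and closing the argument.

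The main obstacle I expect is the reduction ``$\psi_1,\psi_2$ may be taken to be monomials'': one needs enough distinct monomial linear combinations $\lambda\psi_1+\mu\psi_2$ to span the pencil, which requires a careful accounting of the Newton supports of the $Q_i$ and $P$ that come into play. This is precisely where both $s\geq 2$ and hypothesis (b) are essential, since a single monomial $Q$ would pin down only one ratio $\lambda:\mu$ -- this is why the $s=1$ version (Theorem \ref{thm:typical}) needs the extra assumption that $Q$ is not a pure power. A secondary subtlety arises in characteristic $p$ when $p\mid r$, where obstruction (ii) can mix with (iii); one iterates the argument, using (c) to keep a non-$p$-th power in the family at each step, exactly as in the $s=1$ case treated in \cite{BoDeNa1}.
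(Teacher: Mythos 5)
Your overall route -- invoke Bertini-Krull for the linear family $-P+t_1Q_1+\cdots+t_sQ_s$, dispose of the ``common factor'' obstruction via (a) and the ``all $p$-th powers'' obstruction via (c), then show that a composite decomposition $P=\phi_0(\psi_1,\psi_2)$, $Q_i=\phi_i(\psi_1,\psi_2)$ with binary forms of degree $r\geq 2$ forces all the Newton polytopes onto one affine line, contradicting (b) -- is exactly the mechanism of \cite{BoDeNa1}, to which the paper refers for the proof. You are also right that, since each $Q_i$ is a monomial and $k[\xbar]$ is a UFD, each linear factor $\lambda_j^{(i)}\psi_1+\mu_j^{(i)}\psi_2$ of $\phi_i(\psi_1,\psi_2)$ must itself be a monomial.

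The step you yourself flag as the ``main obstacle'' is, however, misattributed. Hypothesis (b) plays no role in showing that the monomial members of the pencil span $\kbar\psi_1+\kbar\psi_2$; it is used only \emph{after} you have replaced $\psi_1,\psi_2$ by two non-proportional monomials $m_1=\xbar^{\alpha_1}$, $m_2=\xbar^{\alpha_2}$, at which point every term $m_1^am_2^b$ ($a+b=r$) of $P$ and of each $Q_i$ has exponent on the affine segment from $r\alpha_1$ to $r\alpha_2$. What forces the spanning is a dichotomy you do not carry out: either some $\phi_i$ already has two factors with distinct ratios $(\lambda:\mu)$, or else every $Q_i$ is $c_i m^r$ for a \emph{single} monomial $m$, hence all the $Q_i$ have the same exponent vector $r\alpha$. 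The latter is excluded only if the $Q_i$ are pairwise non-proportional -- an implicit convention on ``distinct monomials'' that neither you nor the theorem statement makes explicit, and which is precisely why the $s=1$ case (theorem \ref{thm:typical}) needs the extra clause that $Q$ is not a pure power. Without non-proportionality the statement fails: take $\psi_2=y+x^{10}$, $P=\psi_2^2+x\psi_2$, $Q_1=x^2$, $Q_2=2x^2$. Then $s=2$, $\gcd(P,Q_1,Q_2)=1$ since $P(0,y)=y^2$, $\Gamma(P)\cup\Gamma(Q_1)\cup\Gamma(Q_2)$ is two-dimensional, and we are in characteristic $0$, so (a), (b), (c) all hold; yet $P-t_1Q_1-t_2Q_2=\phi_0(x,\psi_2)-(t_1+2t_2)\phi_1(x,\psi_2)$ with $\phi_0=v^2+uv$ and $\phi_1=u^2$, a pencil of binary quadratics that splits over $\overline{k(t_1,t_2)}$. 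So the crux of your argument requires a non-proportionality hypothesis on the $Q_i$ that must be stated and invoked, and (b) is the wrong tool for that sub-step.
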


For example, in characteristic $0$, for each $k\in \{1,\ldots,d\}$, the polynomial 
$P(x_1,\ldots,x_\ell)+t_1 x_1^k+\cdots + t_\ell x_\ell^k$ 
is generically irreducible.

\subsection{Applications of Gao's criteria}
\label{sec:monomial}
%
%In this paragraph we will use the combinatorics criterion
%given by Newton polyhedrons and Minkowski theorem (see the introduction)
%to build large families of polynomials
%$F(\tbar,\xbar)$ with nearly empty spectrum, that is to say
%$\sp(F) = \varnothing$ or $\sp(F) = \{ 0 \}$.
%Moreover the construction is purely combinatorics and easy to check.
The situation is that of polynomials
\vskip 1mm

\centerline{$F(t,\xbar) = P(\xbar) - t \hskip 1pt Q(\xbar)$ with $P,Q \in k[\underline x]$.}
%----------------------------------------
\subsubsection{Gao's first criterion} Gao gives this {\it iff} condition for a polyhedron 
to be non summable \cite{gao} (which he explains to be a generalization of the 
Eisenstein criterion).

\begin{theorem}[Gao] \label{thm:gao}
Let $P \in k[\xbar]$ such that 
\vskip 1mm

\noindent
{\rm (*)} $\Gamma(P)$ is contained in a hyperplane $H\subset \Rr^\ell$ not passing through the origin. 
\vskip 1mm

\noindent
Denote the vertices of $\Gamma(P)$ by $v_1,\ldots,v_k$.
Then $\Gamma_0(P)$ is not summable if and only if the coordinates of $v_1,\ldots,v_k$ are 
relatively prime.
\end{theorem}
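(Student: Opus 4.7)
The plan is to prove the two implications separately, using the supporting hyperplane $H = \{x \in \Rr^\ell : \langle n, x\rangle = c\}$ of $\Gamma(P)$, oriented so that $c > 0$ (which is possible since $0 \notin H$).

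For ``$\gcd \ge 2 \Rightarrow \Gamma_0(P)$ summable'': let $d$ be that gcd. Then $v_i/d \in \Nn^\ell$ for each $i$, so $\Gamma'_0 := \mathrm{conv}\{0, v_1/d, \ldots, v_k/d\}$ is a lattice polytope with at least two points. Since scaling by an integer coincides with the iterated Minkowski sum, one has $\Gamma_0(P) = d\,\Gamma'_0 = \Gamma'_0 + \cdots + \Gamma'_0$ ($d$ summands); regrouping gives the summable decomposition $\Gamma_0(P) = \Gamma'_0 + (d-1)\Gamma'_0$ into lattice polytopes each containing at least two points.

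For the converse, suppose $\Gamma_0(P) = A + B$ with $A, B$ lattice convex sets of at least two points. The functional $\langle n, \cdot\rangle$ attains its minimum $0$ on $\Gamma_0(P)$ only at the apex $0$; translating each summand by its own minimizer of $\langle n, \cdot\rangle$ I may assume $0$ is the unique minimizer on both $A$ and $B$, so $A, B \subseteq \Gamma_0(P)$. Put $h_A = \max_A \langle n, \cdot\rangle$, $h_B = \max_B \langle n, \cdot\rangle$. Additivity of the maximum over Minkowski sums forces $h_A + h_B = c$, and $|A|, |B| \ge 2$ forces $h_A, h_B > 0$; so $t := h_A/c$ lies in $(0, 1)$. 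Each vertex $v_i$ of $\Gamma(P)$ is a vertex of $A+B$ and decomposes uniquely as $v_i = a_i + b_i$ with $a_i, b_i$ vertices forced to lie on the respective top faces of $A, B$, i.e. at heights $h_A$ and $h_B$. The pyramid slice identity $\Gamma_0(P) \cap \{\langle n, \cdot\rangle = sc\} = s\,\Gamma(P)$ then writes $a_i = t\,p_i$ and $b_i = (1-t)\,q_i$ with $p_i, q_i \in \Gamma(P)$, so $v_i = t\,p_i + (1-t)\,q_i$ is a convex combination of points of $\Gamma(P)$; extremality of $v_i$ gives $p_i = q_i = v_i$, whence $a_i = t\,v_i \in \Nn^\ell$. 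Writing $t = p/q$ in lowest terms, $q$ must divide every coordinate of every $v_i$; if that common gcd were $1$ then $q = 1$, contradicting $t \in (0, 1)$.

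The main difficulty is the ``same $t$ for every vertex'' step: a priori each $v_i$ could split with its own ratio $\alpha_i$, and what makes the argument work is the uniform constraint $\langle n, a_i\rangle = h_A$ coming from the fact that all the $a_i$ are forced onto the single top face of $A$. Once this uniformity is in place, the conclusion reduces to elementary divisibility.
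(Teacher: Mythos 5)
The paper states Theorem \ref{thm:gao} as Gao's result and cites \cite{gao}, but it does not reproduce a proof, so there is no in-paper argument to compare against. Taken on its own, your proof is correct. The easy direction (scale the pyramid by $d$ and regroup $d\,\Gamma'_0 = \Gamma'_0 + (d-1)\Gamma'_0$) and the hard direction (the faces of $A$ and $B$ maximizing $\langle n,\cdot\rangle$ carry all the $a_i$, resp.\ all the $b_i$, forcing a single ratio $t=h_A/c$ for every vertex $v_i$, whence $a_i = t\,v_i$ by extremality of $v_i$ in $\Gamma(P)$) both check out, and you correctly identify the ``uniform $t$'' step as the crux. Two minor remarks. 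First, the translation of $A$ and $B$ is actually a no-op here: since $A,B\subset \Nn^\ell$ and the unique $\langle n,\cdot\rangle$-minimizers $a_0\in A$, $b_0\in B$ satisfy $a_0+b_0=0$, one already has $a_0=b_0=0$. Second, the rationality of $t$, used implicitly when you write $t=p/q$ in lowest terms, deserves a word: it follows from $a_i = t\,v_i$ with $a_i,v_i\in\Nn^\ell$ and $v_i\neq 0$ (the latter because $\Gamma(P)$ lies in a hyperplane not through the origin).
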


This result leads to this generic irreducibility criterion.

\begin{corollary} \label{cor:Gao}
Let $P \in k[\xbar]$ satisfying {\rm (*)} above and $Q \in k[\xbar]$ such that 
\vskip 1mm

\noindent
{\rm (a)} the coordinates of all the vertices of $\Gamma(P)$ are relatively prime,
\vskip 0,5mm

\noindent
{\rm (b)} $Q(0,\ldots,0)\not=0$, $\Gamma(Q)\subset \Gamma_0(P)$ and no monomial of $Q$ is \hbox{a vertex of $\Gamma(P)$.}
\vskip 1mm

\noindent
Then the polynomial $F=P-tQ$ is generically irreducible and even has this stronger property: $P-t^\ast Q$ is irreducible in $k[\xbar]$ for every $t^\ast\in k\setminus\{0\}$.
%Let $v_1,\ldots,v_k$ be the vertices of $\Gamma(P)$. 
%If $\gcd(v_1,\ldots,v_k) = 1$ then $\sp(P-t) = \varnothing$ or 
%$\sp(P-t) = \{0\}$.
\end{corollary}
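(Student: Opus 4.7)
The strategy is to combine Gao's criterion (Theorem \ref{thm:gao}) with the Minkowski irreducibility criterion (10). The central geometric observation, on which everything hinges, is that under hypotheses ($\alpha$)--($\gamma$) the Newton polyhedron of $P-t^\ast Q$ is exactly $\Gamma_0(P)$, and likewise for $P-tQ$ viewed over $\overline{k(t)}$.

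First I would analyze the vertex structure of $\Gamma_0(P)$. Since $\Gamma(P)$ is contained in a hyperplane $H\subset \Rr^\ell$ not meeting $0$, the polyhedron $\Gamma_0(P) = \mathrm{conv}(\{0\}\cup \Gamma(P))$ is a pyramid with apex $0$ and base $\Gamma(P)$; its vertex set is $\{0\}\cup {\mathcal V}_{\Gamma(P)}$. In particular, each $v\in {\mathcal V}_{\Gamma(P)}$ remains a vertex of $\Gamma_0(P)$.

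Next I would compute $\Gamma(P-t^\ast Q)$ for an arbitrary $t^\ast\in k\setminus\{0\}$. For every vertex $v$ of $\Gamma(P)$, hypothesis ($\gamma$) ensures that $\xbar^v$ does not appear in $Q$, so the coefficient of $\xbar^v$ in $P-t^\ast Q$ is the nonzero coefficient of $\xbar^v$ in $P$. For the origin, the coefficient in $P-t^\ast Q$ is $-t^\ast Q(0)\neq 0$ by ($\gamma$) and the assumption $t^\ast\neq 0$; recall $0\notin\supp(P)$ since $0\notin H$. Finally $\supp(P-t^\ast Q)\subset \supp(P)\cup \supp(Q)\cup\{0\}\subset \Gamma_0(P)$ using ($\gamma$). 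Thus $\supp(P-t^\ast Q)$ contains all vertices of $\Gamma_0(P)$ and is contained in $\Gamma_0(P)$, which forces $\Gamma(P-t^\ast Q)=\Gamma_0(P)$.

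Now I would invoke Theorem \ref{thm:gao}: hypothesis ($\alpha$) matches condition (*), and hypothesis ($\beta$) gives the relatively prime vertex coordinates, so $\Gamma_0(P)$ is not summable. By the Minkowski theorem (10) applied over $k$, $P-t^\ast Q$ is irreducible in $k[\xbar]$, which is the stronger conclusion. For generic irreducibility, the exact same Newton polyhedron computation goes through with $t$ transcendental in place of $t^\ast$: the coefficient of the constant monomial in $P-tQ \in \overline{k(t)}[\xbar]$ is $-tQ(0)\neq 0$, the vertices of $\Gamma(P)$ appear with their original nonzero coefficients from $P$, and Minkowski's criterion applied over the field $\overline{k(t)}$ gives irreducibility of $P-tQ$ in $\overline{k(t)}[\xbar]$.

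There is no real obstacle; the proof is essentially the identification $\Gamma(P-t^\ast Q)=\Gamma_0(P)$ followed by a direct appeal to Theorem \ref{thm:gao} and (10). The only mildly subtle point is keeping hypothesis ($\gamma$) honest: it is needed both to guarantee that the apex $0$ lies in $\supp(P-t^\ast Q)$ (via $Q(0)\neq 0$), to guarantee that the base vertices of $\Gamma_0(P)$ survive (via ``no monomial of $Q$ is a vertex of $\Gamma(P)$''), and to guarantee that no stray monomial of $Q$ falls outside $\Gamma_0(P)$ (via $\Gamma(Q)\subset\Gamma_0(P)$).
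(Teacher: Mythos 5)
Your proof is correct and takes essentially the same approach as the paper's: invoke Gao's Theorem \ref{thm:gao} to conclude that $\Gamma_0(P)$ is not summable, identify $\Gamma(P-t^\ast Q)=\Gamma_0(P)$, and apply Minkowski's criterion (10). The paper states the Newton-polyhedron identity without justification; your careful verification of it (pyramid structure of $\Gamma_0(P)$, survival of the apex and base vertices in $\supp(P-t^\ast Q)$, and containment of $\supp(Q)$ in $\Gamma_0(P)$) simply spells out what the paper leaves implicit.
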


\begin{proof}
Assumptions (a) and (*) and Gao's theorem \ref{thm:gao} show that $\Gamma_0(P)$ is not summable. As $\Gamma(P-tQ) = \Gamma_0(P)$, Minkowski's theorem concludes that $P-tQ$ is generically irreducible and has the stronger property.
\end{proof}

To completely determine ${\rm sp}(P-tQ)$, it remains to decide whether $P$ is irreducible or not. 
Both may happen: for $P(x,y) = x^p - y^q$ with $\gcd(p,q)=1$, we have $\sp(P-t)=\emptyset$
while for $P(x,y) = x^p - xy^q$, $\sp(P-t)= \{0\}$.

The special case of corollary \ref{cor:Gao} for which $Q=1$ yields this conclusion: if $P$ satisfies conditions (*) 
and (a), then $P(\underline x)- t$ is generically irreducible and has the stronger irreducibility property. For all 
positive and relatively prime integers $a$, $b$, $c$, $d$, $P(x,y) = x^ay^b + x^cy^d$ is
such a polynomial.

%
%\begin{corollary}
%Let $P \in k[\xbar]$ be a polynomial such that $\Gamma(P)$ is contained in a hyperplane $H$ of $\Rr^\ell$ not 
%passing through the origin.
%Let $v_1,\ldots,v_k$ be the vertices of $\Gamma(P)$. 
%If $\gcd(v_1,\ldots,v_k) = 1$ then $\sp(P-t) = \varnothing$ or 
%$\sp(P-t) = \{0\}$.
%\end{corollary}

%\begin{example}
%\begin{itemize}
%  \item Let $P(x,y) = x^ay^b + x^cy^d$ with $\gcd(a,b,c,d)=1$,
%then $\Gamma_0(P)$ is not summable. 
%  
%  \item Corollary: Let $P(x,y) = x^ay^b + x^cy^d$ with $\gcd(a,b,c,d)=1$
%then $\sp(P-t) = \varnothing$ or $\sp(P-t) = \{0\}$.
%  
%  \item The proof of the corollary runs as follows:
%using the condition, for all $t\neq0$, $\Gamma(P-t) = \Gamma_0(P)$ 
%is not summable. Hence by Minkowski theorem, $P-t$ is irreducible.
%
%  \item To completely determine the spectrum it only remains to decide
%whether $P$ is irreducible or not : $P_1(x,y) = x^p - y^q$ with $\gcd(p,q)=1$ verifies $\sp(P_1-t)=\varnothing$,
%while $P_2(x,y) = x^p - xy^q$ has $\sp(P_2-t)= \{0\}$.
%\end{itemize}
%  
%  \item 
%\end{example}

\begin{example} Here is one further example where a polynomial $P$ is deformed by a 
polynomial $Q$ ``below'' $P$ to obtain a polynomial $P-tQ$ satisfying the strong generic 
irreducibility property.
%\vskip 0,5mm
%
%\noindent
%(a) Let $P(x,y,z)= x^p+y^q+z^r$ with $p$, $q$, $r$ relatively prime and $Q(x,y,z) = \sum_{\frac ip+\frac jq +\frac kr < 1} a_{ijk}x^iy^jz^k$ be a polynomial inside $\Gamma_0(P)$. Then $\sp(P-tQ) = \emptyset$.
%\vskip 0,5mm
%  
%\noindent
Let $P(x,y) = x^ay^b + x^cy^d$ with $\gcd(a,b,c,d)=1$ and $Q(x,y)\in k[x,y]$ be such that: 
(i) $Q(0,0) \neq 0$ (ii) $\Gamma(Q) \subset \Gamma_0(P)$ and (iii) 
the monomials $x^ay^b$, $x^cy^d$ are not in $\Gamma(Q)$.
Then we have $\sp(P-t Q) \subset \{0\}$.

\begin{center}
% Aire sous la courbe de l'exponentielle entr 0 et 1.
\begin{tikzpicture}[scale=0.6]

  \draw[gray,->] (-1,0) -- (8.5,0) node[below,black] {$x$};
  \draw[gray,->] (0,-1) -- (0,5.5) node[right,black] {$y$};

 \draw (0,0) grid (8,5);

  \coordinate (O) at (0,0);
  \coordinate (P1) at (3,4);
  \coordinate (P2) at (7,2);
  \coordinate (P3) at (4,2);
  \coordinate (P4) at (3,3);
  \coordinate (P5) at (2,1);
  \coordinate (P6) at (5,3);
  \coordinate (P7) at (4,2);

%  \draw[red, thick] (P2)--(P1)--(P4)--(P3)--(P5)--cycle;

     \fill[blue] (P1) circle (2pt) node[above]{$x^ay^b$};
     \fill[blue] (P2) circle (2pt) node[above]{$x^cy^d$};
     \fill[red] (P3) circle (2pt);
     \fill[red] (P4) circle (2pt);
     \fill[red] (P5) circle (2pt);
     \fill[red] (P6) circle (2pt);

%\node[red] at (3.2,3.8) {$\Gamma(P)$};

 \fill[red] (O) circle (2pt);
\node[below left] at (O) {$0$};

 \draw[red] (O)--(P1)--(P2)--cycle;
\node[red] at (1,2) {$\Gamma_0(P)$};
\end{tikzpicture}
\end{center}

\end{example}

%----------------------------------------
\subsubsection{Gao's second criterion}
The following result of \cite{gao} makes it possible to construct non summable polytopes, by induction on the dimension.
\begin{theorem}[Gao] \label{thm:gao2}
Let $P \in k[\xbar]$ such that $\Gamma(P)$ is not summable, has at least two points, 
and is contained in a hyperplane $H$ of $\Rr^\ell$.
Suppose that $Q \in k[\xbar]$ is a polynomial such that $\Gamma(Q)$ is not included in $H$.
Moreover suppose that there exists some monomial $m\in k[\xbar]$  
such that $\Gamma(Q) \subset \Gamma(P+m)$.
Then $\Gamma(P+Q)$ is not summable.
\end{theorem}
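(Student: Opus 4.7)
The plan is to argue by contradiction: assume $\Gamma(P+Q)=A+B$ is a nontrivial Minkowski decomposition (both summands with integer vertices and at least two points), and use the face structure induced by the hyperplane $H$ to produce a nontrivial summation of $\Gamma(P)$, contradicting the hypothesis.

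First I would set up coordinates adapted to $H$. Since $\Gamma(P)$ has integer vertices and lies in $H$, there is a primitive integer linear functional $\phi$ that is constant on $\Gamma(P)$, say $\phi\equiv c$; up to a unimodular change of variables one can take $H=\{x_\ell=c\}$. Since $\Gamma(Q)\not\subset H$ while $\Gamma(Q)\subset\Gamma(P+m)$, necessarily $m\notin H$, and after negating $\phi$ if needed assume $\phi(m)=M>c$. The key elementary observation is that the slice of $\Gamma(P+m)={\rm conv}(\Gamma(P)\cup\{m\})$ at level $\phi=c+t$, for $t\in[0,M-c]$, is $(1-\lambda)\,\Gamma(P)+\lambda\,m$ with $\lambda=t/(M-c)$, i.e.\ a similar copy of $\Gamma(P)$ scaled by $1-\lambda$, strictly smaller than $\Gamma(P)$ once $t>0$. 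Since $\Gamma(P+Q)\subset\Gamma(P+m)$, each slice of $\Gamma(P+Q)$ obeys this bound, and the bottom face $\Gamma(P+Q)\cap\{\phi=c\}$ coincides with the full $\Gamma(P)$.

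Now suppose $\Gamma(P+Q)=A+B$ is summable. The bottom face of $A+B$ (in direction $-\phi$) decomposes as $A^{\min}+B^{\min}$, where $A^{\min}$, $B^{\min}$ are the bottom faces of $A$, $B$. Equating this with $\Gamma(P)$ and invoking that $\Gamma(P)$ is not summable, one of these summands must be a single point; say $B^{\min}=\{b_0\}$, in which case $A^{\min}$ is a translate of $\Gamma(P)$. Pick a vertex $b_1\in B\setminus B^{\min}$ (possible since $B$ has at least two points) and set $\delta=\phi(b_1)-\phi(b_0)$; then $\delta$ is a positive integer and, by comparison with the top face of $A+B$, $\delta\leq M-c$. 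The translate $A^{\min}+\{b_1\}\subset A+B=\Gamma(P+Q)$ lies at level $\phi=c+\delta$, so it is contained in the slice of $\Gamma(P+m)$ there, which is a copy of $\Gamma(P)$ scaled by $(M-c-\delta)/(M-c)<1$. But $A^{\min}+\{b_1\}$ is congruent to $\Gamma(P)$ by translation, hence shares the positive diameter of $\Gamma(P)$ (using that $\Gamma(P)$ has at least two points), while the scaled slice has strictly smaller diameter: a translate of $\Gamma(P)$ cannot fit in a strictly smaller scaled copy of itself, which is the desired contradiction.

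The step I expect to be most delicate is the face identification $\Gamma(P+Q)\cap\{\phi=c\}=\Gamma(P)$: a priori, cancellations between monomials of $P$ and monomials of $Q$ lying on $H$ could shrink this bottom face to a proper subset of $\Gamma(P)$. In the intended applications, notably linear deformations of the form (13) with formal coefficients, no such cancellation can occur and the identification is automatic; a full-generality argument would require controlling $\Gamma(P+Q)$ in the pathological case where the bottom slice fails to fill out $\Gamma(P)$.
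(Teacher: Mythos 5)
The paper itself gives no proof of Theorem \ref{thm:gao2}; it simply cites Gao \cite{gao}, so there is no in-paper argument to compare against. Your argument is sound on its own terms: choosing a primitive integral $\phi$ defining $H$, observing that $\Gamma(P+m)$ is the cone ${\rm conv}(\Gamma(P)\cup\{m\})$ whose slices are strictly shrunken copies of $\Gamma(P)$, identifying the bottom face of a putative decomposition $A+B$ with $\Gamma(P)$, concluding that one of $A^{\min},B^{\min}$ is a point by non-summability, and then translating $A^{\min}\cong\Gamma(P)$ up by a second vertex $b_1$ of $B$ to land in a slice too small to contain it --- all steps check out, including the boundary case $\delta=M-c$ (a translate of a set with $\geq 2$ points cannot sit in a point). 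This is exactly the sort of face-of-Minkowski-sum argument one expects, and it is complete \emph{modulo} the caveat you raise yourself.

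That caveat is real, and it is worth saying plainly: as literally stated, the theorem is false because of cancellation. Take $\ell=2$, $P=x+y$ (so $\Gamma(P)$ is the primitive segment from $(1,0)$ to $(0,1)$, contained in $H=\{x+y=1\}$, not summable), $m=x^3$, and $Q=x^3-y$. Then $\Gamma(Q)$ is the edge from $(0,1)$ to $(3,0)$ of $\Gamma(P+m)$, hence $\Gamma(Q)\subset\Gamma(P+m)$ and $\Gamma(Q)\not\subset H$; yet $P+Q=x+x^3=x(1+x^2)$, whose Newton polygon is the segment from $(1,0)$ to $(3,0)$ of lattice length $2$, which \emph{is} summable. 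So the hypotheses as written do not force $\Gamma(P+Q)\cap H=\Gamma(P)$, and the conclusion fails. The correct reading --- the one Gao's original polytope formulation uses, and the one needed for the applications in \S\ref{sec:monomial} where $F=P-tQ$ with $t$ a fresh indeterminate --- replaces $\Gamma(P+Q)$ by ${\rm conv}(\Gamma(P)\cup\Gamma(Q))$, equivalently assumes $\supp(P)$ and $\supp(Q)$ disjoint (so no cancellation). Under that reading your identification of the bottom face is automatic and your proof is complete. I would only suggest making this hypothesis explicit rather than leaving it as a remark at the end, since it is load-bearing.
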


With theorem \ref{thm:gao2} one can deform a polynomial $P$ not only by a polynomial $Q$ 
``below'' $P$, but also by some polynomial $Q$ ``above'' $P$.% (but not both).

\begin{example} \label{example:Gao2-1}
Let $P(x,y) = x^p+y^q$ with $p$, $q$ relatively prime. From theorem \ref{thm:gao2} for $Q(x,y) = \sum_{\frac ip+\frac jq < 1} a_{ij}x^iy^j$ (whose monomials are below those of $P$) and $m=1$, the polynomial $P(x,y) - t Q(x,y)$ has empty spectrum. Now take a polynomial $Q$ such that for some $(u,v)\in \Nn^2$, the monomials of $Q$ lie in the triangle 
$(p,0),(0,q),(u,v)$ (and so are above those of $P$) and are distinct from
  $(p,0)$ and $(0,q)$. From theorem \ref{thm:gao2} with $m = x^uy^v$, 
  the spectrum of $P(x,y) - t Q(x,y)$ is empty.  
\begin{center}
% Aire sous la courbe de l'exponentielle entr 0 et 1.
\begin{tikzpicture}[scale=0.6]

  \draw[gray,->] (-1,0) -- (8.5,0) node[below,black] {};
  \draw[gray,->] (0,-1) -- (0,5.5) node[right,black] {};

 \draw (0,0) grid (8,5);

  \coordinate (O) at (0,0);
  \coordinate (P1) at (0,3);
  \coordinate (P2) at (4,0);
  \coordinate (P3) at (4,2);
  \coordinate (P4) at (3,3);
  \coordinate (P5) at (2,3);
  \coordinate (P6) at (6,4);
  \coordinate (P7) at (4,2);

%  \draw[red, thick] (P2)--(P1)--(P4)--(P3)--(P5)--cycle;

     \fill[blue] (P1) circle (2pt) node[left]{$x^p$};
     \fill[blue] (P2) circle (2pt) node[below]{$y^q$};
     \fill[red] (P3) circle (2pt);
     \fill[red] (P4) circle (2pt);
     \fill[red] (P5) circle (2pt);
     \fill[red] (P6) circle (2pt) node[above]{$m = x^uy^v$};

%\node[red] at (3.2,3.8) {$\Gamma(P)$};

% \fill[red] (O) circle (2pt);
\node[below left] at (O) {$0$};

 \draw[red] (P1)--(P2)--(P6)--cycle;
%\node[red] at (1,2) {$\Gamma_0(P)$};
\end{tikzpicture}
\end{center}  
Similar examples can be given in higher dimension starting with $P(x_1,\ldots, x_\ell)=x_1^{p_1}+\cdots+x_\ell^{p_\ell}$ with $p_1,\dots,p_\ell$ relatively prime.
\end{example}

Theorem \ref{thm:gao2} can also be used to explicitly produce a deformation of a (possibly reducible) polynomial $Q(\xbar)$ into an irreducible one.

\begin{example} \label{example:Gao2-2}
Let $Q(x,y)$ be any polynomial and  $P(x,y) = x^p+y^q$ with $p$, $q$ relatively prime and $p,q > \deg Q$. 
Then the polynomial $(x^p+y^q)-tQ(x,y)$ has the strong generic irreducibility property, so  
$Q(x,y)+\mu(x^p+y^q)$ is irreducible in $k[x,y]$ for every $\mu\in k$, $\mu\neq0$.
\end{example}

%%%%%%%%%%%%%%%%%%%%%%%%%%%%%%%%%%%%%%%%%%%%%%%%%%%%%%%%%%%%%%%%%
%%%%%%%%%%%%%%%%%%%%%%%%%%%%%%%%%%%%%%%%%%%%%%%%%%%%%%%%%%%%%%%%%

%Here no specialization is really made, or if one prefers,  $X$ is specialized to the generic point of the affine $s$-dimensional space ${\Aa}^s$. Our goal is to provide polynomials that satisfy our assumption (Generic Irred) and which will be our main examples in the rest of the paper. For $s=1$, polynomials of the form $P(T,Y)-XQ(T,Y)$ with $P,Q\in k[T,Y]$ relatively prime, have been much studied in the literature, in particular those with $Q=1$, and will be some of our basic examples. Other examples can be derived from \cite{BoDeNa2}.

\bibliography{fampol4}
\bibliographystyle{alpha}

\end{document}